\newcommand{\spacing}[1]{
\renewcommand{\baselinestretch}{#1}
\setlength{\footnotesep}{\baselinestretch\footnotesep}}
\theoremstyle{plain}
\newtheorem{theorem}{Theorem}[section]  
\newtheorem{proposition}[theorem]{Proposition}  
\newtheorem{corollary}[theorem]{Corollary}  
\newtheorem{definition}[theorem]{Definition}  
\newtheorem{remark}[theorem]{Remark} 
\newtheorem{example}[theorem]{Example}  
\newtheorem{question}[theorem]{Question}
\newcommand{\balpha}[0]{\boldsymbol{\alpha} }
\newcommand{\pr}[1]{\left(#1\right)}
\newcommand{\PS}{\mathsf{P\Lambda}}
\newcommand{\x}[1]{\mathbf{x}_{#1*}}
\newcommand{\area}[1]{\operatorname{area}{(#1)}}
\newcommand{\sgn}[1]{\operatorname{sgn}(#1)}
\newcommand{\sect}[1]{\hyperref[section #1]{Section #1}}
\begin{document}

\title[Decomposition of Polysymmetric Functions and Stack Partitions
]{Decomposition of Polysymmetric Functions and Stack Partitions
}
\author[]{David Martinez}
\thanks{}

\keywords{polysymmetric functions, symmetric functions, stack partitions, generating functions, Jacobi–Trudi identities, involution}

\begin{abstract}
Polysymmetric functions, introduced by Asvin G and Andrew O’Desky as a generalization of symmetric functions, have natural connections to algebraic geometry and provide a foundation for further developments.
In this paper, we study polysymmetric functions using stack partitions and develop combinatorial descriptions of several polysymmetric bases.
We introduce two new signed polysymmetric bases and give explicit transition formulas among the monomial, homogeneous, elementary, power, and signed polysymmetric bases.
These results extend many familiar identities from symmetric function theory to the polysymmetric setting.
\end{abstract}

\maketitle
\section{Introduction}

Polysymmetric functions were introduced by Asvin G and Andrew O'Desky
in \cite{g2024polysymmetricfunctionsmotivicmeasures} as a generalization of the
algebra $\Lambda$ of symmetric functions.
They defined several polysymmetric bases for the graded algebra $\PS$ and
introduced the notion of a \emph{type} $\tau$, which plays a role analogous to
that of a partition in classical symmetric function theory.
These types coincide with objects that have appeared previously in the
literature under the name \emph{factorization patterns} of $n$.
Such patterns were studied by Hultquist, Mullen, and Niederreiter
\cite{MR944348} in the context of association schemes and derived PBIB designs, and later by
Agarwal and Mullen \cite{Agarwal1988PartitionsW}, who gave combinatorial
interpretations and connections to plane partitions.
Since then, related structures have appeared in a variety of settings,
including lattice paths \cite{Sahibpreet}, mock theta functions
\cite{Brietzke2011}, split perfect partitions \cite{goyal2019analogue},
beaded partitions \cite{MR2956334}, scalar polynomial functions
\cite{MR1176446}, and non-squashing partitions \cite{FOLSOM20161482}.

The work of Asvin G and Andrew O'Desky was motivated in part by applications
to algebraic geometry, with an emphasis on connections to the Grothendieck
ring of varieties.
Subsequently, Khanna and Loehr \cite{khanna2024transitionmatricespierityperules}
studied transition matrices and multiplication rules among several
polysymmetric bases, focusing primarily on bases arising from pure tensor
constructions.

This paper continues the study of polysymmetric functions from a combinatorial
perspective.
We adopt the term \emph{stack partitions} for types and use this language
throughout.
In \sect{2}, we introduce polysymmetric functions and stack partitions.
In \sect{3}, we give a new combinatorial interpretation of several
polysymmetric bases in terms of monomial polysymmetric functions, obtained
by counting certain integer matrices.
In \sect{4}, we introduce two new signed polysymmetric bases and establish
their properties.
In \sect{5}, we define a signed involution on $\PS$ and derive Jacobi--Trudi type identities, determining the action of this involution on the remaining known polysymmetric bases and obtaining explicit transition formulas relating the complete homogeneous, elementary, and power bases of the polysymmetric algebra, together with their signed variants.
In \sect{6}, we pose structural questions for polysymmetric functions, consider related questions for stack partitions and their generalizations, and record initial values of an associated enumerative sequence.

\section{Introduction to Polysymmetric Functions and Stack Partitions}
\label{section 2}
In this section, we introduce polysymmetric functions and stack partitions, assuming the reader has some familiarity with the theory of symmetric functions and their standard bases $e_\lambda, h_\lambda, p_\lambda, m_\lambda,s_\lambda$ (see \cite{Stanley_Fomin_1999,Macdonald}).
\subsection{Polysymmetric Functions}
Let $\Lambda=\Lambda_\mathbb{Q}$ denote the $\mathbb{Q}$-algebra of symmetric functions. For each positive integer $m$, let $\Lambda_{(m)}$ be a copy of $\Lambda$ in which each homogeneous symmetric function of degree $n$ in $\Lambda$ is regarded as having degree $mn$. The $\mathbb{Q}$-algebra of polysymmetric functions is defined as the tensor product,
\begin{align}
    \PS=\Lambda_{(1)}
\otimes \Lambda_{(2)}
\otimes \Lambda_{(3)} \otimes \cdots.
\end{align}

Then $\PS$ can be viewed as having the set of indeterminates $\x{*}= \{x_{m, j}\}_{m,j=1}^\infty$, where we say $x_{m,j}$ has degree $m$ and view $\Lambda_{(m)}$ as the ring of symmetric functions in a variable set $\x{m}=\{x_{m, j}\}_{j=1}^\infty$. 
Let $\mathbb{Q}[[\x{*}]]$ denote the ring of all formal infinite sums of monomials in $\x{*}$ with rational coefficients and bounded degree. 
A formal series $f=f(\x{*})$ is in $\PS$ if for each fixed $m$, $f$ remains unchanged by any permutation of the variables in $\x{m}$. 
An isomorphism between the tensor and series versions of $\PS$ is defined by sending the pure tensor $f_1\otimes f_2\otimes f_3\otimes\cdots$ to the formal series $f_1(\x{1}) f_2(\x{2})f_3(\x{3})\cdots$. Therefore, $\PS$ has the structure of a graded algebra

\begin{align}
    \PS = \bigoplus_{n\geq0}\PS^n,
\end{align}
where $\PS^n$ is the vector space of homogeneous polysymmetric functions of degree $n$.
\begin{example}
Let
\begin{align}
    f
= \sum_{i,k} x_{2,i}^2 x_{3,k}^2
  + \sum_{i<j, \, k} x_{2,i}x_{2,j}x_{3,k}^2
  + \sum_{i,j, k,\ell} x_{1,i}x_{2,j}x_{3,k}x_{4,\ell}.
\end{align}
Each monomial is symmetric in the variable sets $x_{1,*},x_{2,*},x_{3,*},x_{4,*}$, thus $f\in \PS$.
Since every term of $f$ has degree $10$, we also have $f\in \PS^{10}$.
\end{example}

\subsection{Stack Partitions}
 A \emph{stack} is an ordered pair of positive integers $(d, m)$, which we write as $d^m$. We say that $d^m$ has degree $d$ and  multiplicity $m$. We order stacks by having $d_i^{m_i} \geq d_j^{m_j}$ if $d_i > d_j$, or if $d_i = d_j$ and $m_i \geq m_j$. A \emph{stack partition} of $n$ is a weakly decreasing sequence of stacks $\tau = ( d_1^{m_1}, d_2^{m_2}, \ldots, d_s^{m_s})$
such that $d_1 m_1 + d_2 m_2 + \ldots + d_s m_s = n$.
We write $\tau \Vdash n$ or $|\tau|=n$ to indicate that $\tau$ is a stack partition of $n$. 
When every stack in $\tau$ has multiplicity $1$ (i.e., $\tau$ is an ordinary partition), we write $\tau \vdash n$.


For any positive integer $r$, we define the \emph{stack partition of $n^r$} as the stack partition of $n$ with the multiplicities of each stack scaled by $r$.

\begin{example}
The stack partitions of $4$ and $4^3$ are given by
\begin{align*}
&4: \quad 4, \quad 3 1, \quad 2 2, \quad 2 1 1, \quad 1 1 1 1, \quad 2^2, \quad 2 1^2, \quad 1^2 1 1, \quad 1^2 1^2, \quad 1^3 1, \quad 1^4; \\
&4^3: \quad 4^3, \quad 3^3 1^3, \quad 2^3 2^3, \quad 2^3 1^3 1^3, \quad 1^3 1^3 1^3 1^3, \quad 2^6, \quad 2^3 1^6, \quad 1^6 1^3 1^3, \quad 1^6 1^6, \quad 1^9 1^3, \quad 1^{12}.
\end{align*}
\end{example}

The \emph{length} of a stack partition $\tau$ is the number of stacks it contains, denoted by $\ell(\tau)$. For a fixed $d$, let $\tau \mid_d$ denote the partition consisting of the multiplicities of the stacks with degree $d$ in $\tau$. Let $\operatorname{area}(\tau \mid_d)$ be the sum of the multiplicities of the stacks with degree $d$ in $\tau$, and define $\operatorname{area}(\tau)$ as the total sum of all multiplicities within $\tau$, so that $\operatorname{area}(\tau) = \sum_{i=1}^{\ell(\tau)} m_i$.
We define the \emph{sign} of the stack partition $\tau$ as
\begin{align}
\label{eqn:sgn}
\operatorname{sgn}(\tau) = \prod_{i=1}^{k} (-1)^{\operatorname{area}(\tau \mid_{i})} = (-1)^{\operatorname{area}(\tau)}.
\end{align}

\begin{example}
Let $\tau = (3^3, 3^2, 3^2, 2^7, 2^2, 2^1, 1^5, 1^1)$, so that $\ell(\tau)=8$, 
$\tau|_3=(3,2,2)$, $\tau|_2=(7,2,1)$, and $\tau|_1=(5,1)$. 
Furthermore, $\operatorname{area}(\tau)=23$ and $\operatorname{sgn}(\tau)=(-1)^{23}=-1$.
\end{example}

Our interpretation of stack partitions aligns with \cite{g2024polysymmetricfunctionsmotivicmeasures, khanna2024transitionmatricespierityperules}, where they are referred to as \emph{types}. We use the term \emph{stack partitions} to emphasize their construction from ordinary partitions by visualizing each number as a box and allowing the action of ``stacking'' boxes with the same number. This construction connects naturally to the concepts of stack, degree, and multiplicity introduced earlier, where each box represents a stack, its label gives the degree, and the number of boxes in each column gives its multiplicity. After making a stacking, we order the resulting stacks as described earlier in the section.

\begin{example}

We start with the partition $(4,2,2,2,1,1,1)$, we obtain the stack partition

\begin{center}
\begin{tikzpicture}[>=stealth, thick, scale=0.8]

\node (left) at (-1.3,0) {$
\begin{ytableau}
4 & 2 & 2 & 2 & 1 & 1 & 1
\end{ytableau}$};

\node (right) at (4,0.7) {$
\begin{ytableau}
\none &\none&\none&1 \\
\none &2&\none&1 \\
4& 2 & 2&1 
\end{ytableau},$};

\draw[->, line width=1.8pt] (1.4,0) -- (2.4,0);
\end{tikzpicture}
\end{center}
which corresponds to $(4,2^2,2,1^3)$. Here, the stack partition is obtained by placing one of the twos on top of another two and stacking all three of the ones together, followed by the ordering.
\end{example}

\subsection{Polysymmetric Bases}
In \cite{g2024polysymmetricfunctionsmotivicmeasures}, four bases of $\PS$ are introduced, denoted by $\{H_\tau\}$, $\{E^+_\tau\}$, $\{E_\tau\}$, and $\{P_\tau\}$, which are non-tensor bases. We provide interpretations in terms of stack partitions using the monomial polysymmetric function $M_\tau$, also introduced in \cite{g2024polysymmetricfunctionsmotivicmeasures}, which is a pure tensor product.
\begin{definition}
    Given a stack partition $\tau = (d_1^{m_1}, d_2^{m_2}, \ldots, d_s^{m_s}) \Vdash n$, define the \textbf{monomial polysymmetric function $M_\tau(\mathbf{x})$ of type $\tau$} by
    \begin{align}
         M_\tau(\mathbf{x}) = \sum_{\alpha} \mathbf{x}^\alpha,
    \end{align}
    where each monomial $\mathbf{x}^\alpha$ is of the form
    \begin{align}
         \mathbf{x}^\alpha = x_{d_1, \alpha_1}^{m_1} x_{d_2, \alpha_2}^{m_2} \cdots x_{d_s, \alpha_s}^{m_s}.
    \end{align}
    The sum is taken over all sequences $\alpha = (\alpha_1, \ldots, \alpha_s)$ of positive integers such that $\alpha_i \ne \alpha_j$ whenever $d_i = d_j$ and $i \ne j$. 
\end{definition}

\begin{remark}
    We have that $M_\tau(\mathbf{x})$ equals $m_{\tau\mid _1}(\x{1})m_{\tau\mid_2}(\x{2})\cdots m_{\tau\mid_s}(\x{s})$, where $m_{\lambda}$ is the monomial symmetric function indexed by the partition $\lambda$.
\end{remark}
\begin{example}
Let $\tau = (2^2,1^3,1^2,1^2)$, then $M_\tau$ is the sum
    \begin{align*}
        M_{2^21^31^21^2}(\mathbf{x})= m_{(3,2,2)}(\mathbf{x}_{1*})m_{(2)}(\mathbf{x}_{2*}) = \sum_{\substack{i<j,\; k,l\\ k\notin\{i,j\}}}
 x_{1,i}^2 x_{1,j}^2 x_{1,k}^3x_{2,l}^2.
    \end{align*}
\end{example}
We define four bases of $\PS$, denoted by $\{H_\tau\}$, $\{E^+_\tau\}$, $\{E_\tau\}$, and $\{P_\tau\}$. These serve as polysymmetric analogues of the symmetric bases $\{h_\lambda\}$, $\{e_\lambda\}$, and $\{p_\lambda\}$. 

The $d$\textbf{th complete homogeneous polysymmetric function} is 
\begin{equation} 
H_d = 
\sum_{\alpha \Vdash d } M_{\alpha}.
\end{equation} 

The $d$\textbf{th unsigned elementary polysymmetric function} is
\begin{equation} 
E^+_{d} = \sum_{\alpha\vdash d}  M_\alpha .
\end{equation} 

The $d$\textbf{th elementary polysymmetric function} is 
\begin{equation} 
E_d =
\sum_{\alpha\vdash d} 
\operatorname{sgn}(\alpha)M_\alpha.
\end{equation} 

The $d$\textbf{th power polysymmetric function} is
\begin{equation} 
P_{d} = \sum_{k\mid d} k M_{k^{d/k}},
\end{equation} 
where $k\mid d$ means $k$ divides $d$.
\begin{example}
    Let \( d = 4 \). Then the expansions in the monomial polysymmetric basis are given by
    \begin{align}
        H_4 &= 
        M_{4} 
        + M_{31} 
        + M_{22} 
        + M_{2^2} 
        + M_{211} 
        + M_{21^2} 
        + M_{1111}+ M_{1^2 11} 
        + M_{1^2 1^2}
        + M_{1^3 1}  
        + M_{1^4}, \\
        E^+_4 &= 
        M_{4} 
        + M_{31} 
        + M_{22} 
        + M_{211} 
        + M_{1111}, \\
        E_4 &= 
        -M_{4} 
        + M_{31} 
        + M_{22} 
        - M_{211} 
        + M_{1111}, \\
        P_4 &= 
        4M_{4} + 2M_{2^2} + M_{1^4}.
    \end{align}
\end{example}
We define $H_{d^m}(\x{*})=H_d(\x{*}^m)$, meaning every variable $x_{i,j}$ appearing in every monomial of $H_d$ gets replaced by $x_{i,j}^m$. We define similarly $E^+_{d^m}(\x{*}) = E^+_{d}(\x{*}^m), E_{d^m}(\x{*}) = E_d(\x{*}^m)$, and $P_{d^m}(\x{*}) = P_d(\x{*}^m)$, where all belong in $\PS^{dm}$. Lastly, for any stack partition $\tau = (d_1^{m_1}, d_2^{m_2},\ldots, d_s^{m_s})$, define
\begin{align}
    H_{\tau} = \prod_{i=1}^{t} H_{d_i^{m_i}}, \quad
E_{\tau}^+ = \prod_{i=1}^{t} E^+_{d_i^{m_i}}, \quad
E_{\tau} = \prod_{i=1}^{t} E_{d_i^{m_i}}, \quad
P_{\tau} = \prod_{i=1}^{t} P_{d_i^{m_i}}.
\end{align}
It was shown in \cite{g2024polysymmetricfunctionsmotivicmeasures} that each of the sets $\{H_{\tau}: \tau\Vdash n\}, \{E^+_{\tau}:
\tau\Vdash n\}, \{E_{\tau}: \tau\Vdash n\}$, and $\{P_{\tau}: \tau\Vdash n\}$ is a linear basis of $\PS^n$. 

The generating functions for the polysymmetric bases $H$, $E^+$, $E$, and $P$ 
appear in \cite{g2024polysymmetricfunctionsmotivicmeasures}. 
We provide a short proof for completeness.

\begin{proposition}[\cite{g2024polysymmetricfunctionsmotivicmeasures}]
The generating functions for $H$, $E^+$, $E$, and $P$ are given by
\begin{align}
    \label{gen:H}
    \mathcal{H}(t) &= \sum_{d=0}^\infty H_{d^{m}}  t^{md} = \prod_{i,j} (1 - x_{i, j}^m t^{m i})^{-1}, \\
    \label{gen:EP}
    \mathcal{E}^+(t) &= \sum_{d=0}^\infty E^+_{d^{m}}  t^{md} = \prod_{i,j} (1 + x_{i, j}^m t^{m i}), \\
    \label{gen:E}
    \mathcal{E}(t) &= \sum_{d=0}^\infty E_{d^{m}}  t^{md} = \prod_{i,j} (1 - x_{i, j}^m t^{m i}), \\
    \label{gen:P}
    \mathcal{P}(t) &= \sum_{d=1}^\infty  P_{d^{m}} t^{md - 1} = \sum_{d=1}^\infty  \pr{ \sum_{i \mid d} i \sum_{j=1}^\infty x_{i, j}^{md/i}} t^{md - 1}.
\end{align}
\end{proposition}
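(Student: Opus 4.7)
The plan is to prove each identity in the case $m=1$ by reducing to the standard symmetric-function generating functions on each factor $\Lambda_{(i)}$, and then derive the general-$m$ form from the $m=1$ case by the substitution $x_{i,j}\mapsto x_{i,j}^m$ together with the regrading $t\mapsto t^m$. The central observation is that a monomial polysymmetric function factors cleanly along degrees as $M_\tau = \prod_d m_{\tau|_d}(\x{d})$, so the coefficient of $t^d$ on each side will split into a product indexed by the degree $i$, to be handled by classical identities on $\Lambda_{(i)}$.

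First I would settle \eqref{gen:H} at $m=1$. Expanding $\prod_{i,j}(1-x_{i,j}t^i)^{-1}$ as a product of geometric series and grouping the $j$-factors for each fixed $i$, the classical identity $\prod_j(1-y_j u)^{-1}=\sum_{k\geq 0}h_k(y)u^k$ applied with $u=t^i$ and $y=\x{i}$ gives
\[
\prod_{i,j}(1-x_{i,j}t^i)^{-1} \;=\; \prod_i \sum_{k\geq 0}h_k(\x{i})\,t^{ik},
\]
whose coefficient of $t^d$ is $\sum_{(k_i)\colon \sum_i i k_i = d}\prod_i h_{k_i}(\x{i})$. To match this against $H_d=\sum_{\tau\Vdash d}M_\tau$, I would expand each $h_{k_i}=\sum_{\lambda_i\vdash k_i}m_{\lambda_i}$ and note that choosing an ordinary partition $\lambda_i\vdash k_i$ at every degree $i$ with $\sum_i ik_i=d$ is exactly the data of a stack partition $\tau\Vdash d$ via $\lambda_i=\tau|_i$. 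The factorization $M_\tau=\prod_i m_{\lambda_i}(\x{i})$ then makes the two sides agree term by term.

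The identities \eqref{gen:EP} and \eqref{gen:E} follow the same template. Since $E^+_d$ and $E_d$ range only over ordinary partitions $\alpha\vdash d$ (all multiplicities equal to $1$), each $M_\alpha$ factors as $\prod_i m_{(1^{r_i})}(\x{i}) = \prod_i e_{r_i}(\x{i})$, where $r_i$ is the number of parts of $\alpha$ equal to $i$. Applying $\prod_j(1+y_j u)=\sum_r e_r(y)u^r$ per degree produces \eqref{gen:EP}, and inserting the sign $\sgn{\alpha}=(-1)^{\ell(\alpha)}=(-1)^{\sum_i r_i}$ converts this into $\prod_j(1-y_j u)=\sum_r(-1)^r e_r(y)u^r$, giving \eqref{gen:E}. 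Identity \eqref{gen:P} is essentially definitional: for the single-stack type $k^{d/k}$ one has $M_{k^{d/k}}=m_{(d/k)}(\x{k})=\sum_j x_{k,j}^{d/k}$, so $P_d=\sum_{k\mid d}k\sum_j x_{k,j}^{d/k}$ directly. The passage from $m=1$ to general $m$ in all four cases is then routine: substituting $x_{i,j}\mapsto x_{i,j}^m$ converts $H_d$ into $H_{d^m}$ (via the convention $H_{d^m}(\x{*})=H_d(\x{*}^m)$) and $(1-x_{i,j}t^i)$ into $(1-x_{i,j}^m t^i)$, after which replacing $t$ by $t^m$ absorbs the degree shift into $t^{mi}$ and $t^{md}$.

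The only real obstacle is bookkeeping: I would need to track carefully the bijection between stack partitions $\tau\Vdash d$ and families $(\lambda_1,\lambda_2,\ldots)$ of ordinary partitions $\lambda_i\vdash k_i$ with $\sum_i ik_i=d$, and avoid conflating the $m$-scaling of the variables with the $m$-scaling of $t$. No step in the argument goes deeper than the classical generating function identities on $\Lambda$.
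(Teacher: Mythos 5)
Your proposal is correct and follows essentially the same route as the paper: reduce to $m=1$ via $F_{d^m}(\x{*})=F_d(\x{*}^m)$ with $t\mapsto t^m$, expand the infinite product, obtain \eqref{gen:E} from \eqref{gen:EP} by a sign substitution, and treat \eqref{gen:P} as definitional. The only difference is organizational — you factor the expansion through the classical identities $\prod_j(1-y_ju)^{-1}=\sum_k h_k(y)u^k$ and $\prod_j(1+y_ju)=\sum_k e_k(y)u^k$ on each $\Lambda_{(i)}$ together with $M_\tau=\prod_i m_{\tau|_i}(\x{i})$, whereas the paper enumerates the monomials of the product directly — and that extra structure is sound.
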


\begin{proof}
    Since $F_{d^m}(\mathbf{x}_*) = F_d(\mathbf{x}_*^m)$ 
for $F \in \{H, E^+, E, P\}$, the case $m>1$ 
is obtained from $m=1$ by the substitution 
$x_{i,j} \mapsto x_{i,j}^m$ and $t \mapsto t^m$. 
It therefore suffices to consider $m=1$. To verify \eqref{gen:H}, observe that for each pair $(i,j)$,
\begin{align}
    (1 - x_{i,j} t^i)^{-1} = \sum_{k=0}^\infty x_{i,j}^k t^{ik},
\end{align}
as a formal power series in the indeterminate $t$.
Multiplying over all pairs $(i,j)$ enumerates every possible choice of 
variables $x_{i,j}$, allowing repetitions, and therefore yields 
$\sum_{d=0}^\infty H_d t^d$ as the generating function. For \eqref{gen:EP}, expanding $\prod_{i,j}(1 + x_{i,j} t^i)$ 
produces monomials that are square-free in the variables $x_{i,j}$, 
yielding $\sum_{d=0}^\infty E^+_d t^d$. Formula \eqref{gen:E} follows from \eqref{gen:EP} 
after the substitution $x^m_{i,j} \mapsto -x^m_{i,j}$, 
and \eqref{gen:P} follows directly from 
the definition of the polysymmetric power sums.
\end{proof}
\section{A New Combinatorial Interpretation of the Expansion in the $M$ Basis}
\label{section 3}

We now present a new combinatorial interpretation of the expansions of the four bases in terms of the monomial polysymmetric functions. This result was previously established by the authors of \cite{khanna2024transitionmatricespierityperules} using brick tabloids. Here, we provide an alternative interpretation by counting certain matrices.
\subsection{The Product of Polysymmetric Monomials $M$}
In this subsection, we give a matrix counting rule for products of monomial polysymmetric functions.

\begin{definition}
Let $\balpha=(\alpha_1, \alpha_2,\ldots,\alpha_r)$ be a sequence of stack partitions and fix 
$\tau=(d_1^{m_1},\ldots,d_s^{m_s})$. 
Define $s^\tau_{\balpha}(u)$ to be the number of $\mathbb{N}$-matrices with $r$ rows and $\ell(\tau\mid_u)$ columns such that the $i$th row is a permutation of $\alpha'_i\mid_u$ and whose sequence of column sums equals $\tau\mid_u$.
Here $\alpha'_i\mid_u$ denotes the sequence obtained from $\alpha_i\mid_u$ with trailing zeros (if necessary) so that its length equals $\ell(\tau\mid_u)$.
If any $\ell(\alpha_i\mid_u)>\ell(\tau\mid_u)$, then $s_{\balpha}^\tau(u)=0$.
\end{definition}

We combine these counts over all values of $u$ by defining
\begin{align}
    \mathcal{S}^{\tau}_{\balpha} = \prod_{u=1}^\infty s^\tau_{\balpha}(u)
\end{align}

\begin{example}
\label{ex:1}
Let $\balpha=(\alpha_1,\alpha_2,\alpha_3)$, where 
$\alpha_1=(2,2,1^2,1)$, $\alpha_2=(2,1)$, and $\alpha_3=(3^3,1)$, 
and fix $\tau=(3^3,2,2,2,1^2,1^2,1)$. 
We compute $s^\tau_{\balpha}(1)$ by considering $\mathbb{N}$-matrices with three rows and 
column-sum sequence $\tau\mid_1=(2,2,1)$. 
Each row is a permutation of $\alpha'_i\mid_1$, hence the first, second, and third rows permute $(2,1,0)$, $(1,0,0)$, and $(1,0,0)$, respectively.
The valid matrices are
\begin{align}
\begin{bmatrix} 2 & 1 & 0 \\ 0 & 1 & 0 \\ 0 & 0 & 1 \end{bmatrix},\quad
\begin{bmatrix} 2 & 1 & 0 \\ 0 & 0 & 1 \\ 0 & 1 & 0 \end{bmatrix},\quad
\begin{bmatrix} 2 & 0 & 1 \\ 0 & 1 & 0 \\ 0 & 1 & 0 \end{bmatrix},\quad
\begin{bmatrix} 1 & 2 & 0 \\ 1 & 0 & 0 \\ 0 & 0 & 1 \end{bmatrix},\quad
\begin{bmatrix} 1 & 2 & 0 \\ 0 & 0 & 1 \\ 1 & 0 & 0 \end{bmatrix},\quad
\begin{bmatrix} 0 & 2 & 1 \\ 1 & 0 & 0 \\ 1 & 0 & 0 \end{bmatrix}.
\end{align}
so $s^\tau_{\balpha}(1)=6$.
Next, $s^\tau_{\balpha}(2)$ counts $\mathbb{N}$-matrices with column-sum sequence $\tau\mid_2=(1,1,1)$.
Here the first, second, and third rows permute $(1,1,0)$, $(1,0,0)$, and $(0,0,0)$, respectively.
The valid matrices are
\begin{align}
\begin{bmatrix} 0 & 1 & 1 \\ 1 & 0 & 0 \\ 0 & 0 & 0 \end{bmatrix}, \quad
\begin{bmatrix} 1 & 0 & 1 \\ 0 & 1 & 0 \\ 0 & 0 & 0 \end{bmatrix}, \quad
\begin{bmatrix} 1 & 1 & 0 \\ 0 & 0 & 1 \\ 0 & 0 & 0 \end{bmatrix}.
\end{align}
Thus $s^\tau_{\balpha}(2)=3$. Finally, for $s^\tau_{\balpha}(3)$, the only valid matrix is
\begin{align}
\begin{bmatrix} 0 \\ 0 \\ 3 \end{bmatrix},
\end{align}
so $s^\tau_{\balpha}(3)=1$. 
Therefore $\mathcal{S}^{\tau}_{\balpha} = 18.$
\end{example}

This leads to the following theorem, which expresses the coefficients appearing in products of monomial polysymmetric functions in terms of the matrix counts defined above.

\begin{theorem}
Let $\balpha=(\alpha_1,\ldots,\alpha_r)$ be a sequence of stack partitions and  
$n=\sum_{i=1}^r |\alpha_i|$. Then
\begin{align}
    \label{thm:MtimesM}
       \prod^r_{i=1}{M_{\alpha_i}} = \sum_{\tau\Vdash n} \mathcal{S}_{\balpha}^{\tau} M_{\tau}.
    \end{align}
\end{theorem}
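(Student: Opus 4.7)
The plan is to reduce this polysymmetric identity to the classical product rule for monomial symmetric functions, applied separately to each variable set $\mathbf{x}_{u*}$. The crucial starting point is the factorization $M_\tau = \prod_{u \ge 1} m_{\tau\mid_u}(\mathbf{x}_{u*})$ noted in the remark following the definition of $M_\tau$. Using this, I would write
\begin{align*}
\prod_{i=1}^r M_{\alpha_i} \;=\; \prod_{i=1}^r \prod_{u=1}^\infty m_{\alpha_i\mid_u}(\mathbf{x}_{u*}) \;=\; \prod_{u=1}^\infty \left( \prod_{i=1}^r m_{\alpha_i\mid_u}(\mathbf{x}_{u*}) \right),
\end{align*}
so that the problem decouples over $u$: for each fixed $u$, we have a product of ordinary monomial symmetric functions living inside the single variable set $\mathbf{x}_{u*}$.

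Next I would invoke the classical rule for multiplying monomial symmetric functions, namely that the coefficient of $m_\mu$ in $\prod_{i=1}^r m_{\lambda^{(i)}}$ counts the nonnegative integer matrices whose $i$th row is a rearrangement of $\lambda^{(i)}$ padded with zeros and whose column-sum sequence equals $\mu$. I would derive (or recall) this by extracting the coefficient of the fixed monomial $\mathbf{x}^\mu$: each monomial of $\prod_i m_{\lambda^{(i)}}$ arises by choosing a distinct permutation $\alpha^{(i)}$ of each $\lambda^{(i)}$ and adding them, and the choices with $\sum_i \alpha^{(i)} = \mu$ are exactly the matrices described. Applied to $\lambda^{(i)} = \alpha_i\mid_u$ and $\mu = \tau\mid_u$, this yields precisely $s^\tau_{\balpha}(u)$ as defined in the text, including the vanishing case $\ell(\alpha_i\mid_u) > \ell(\tau\mid_u)$.

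With this identification in hand, the remaining step is combinatorial bookkeeping. For each $u$ I expand
\begin{align*}
\prod_{i=1}^r m_{\alpha_i\mid_u}(\mathbf{x}_{u*}) \;=\; \sum_{\mu^{(u)}} s^{\tau}_{\balpha}(u)\, m_{\mu^{(u)}}(\mathbf{x}_{u*})
\end{align*}
where $\mu^{(u)}$ ranges over all partitions of $\sum_i |\alpha_i\mid_u|$, and then multiply over $u$. A tuple $(\mu^{(u)})_{u \ge 1}$ with only finitely many nonempty terms corresponds bijectively to a stack partition $\tau$ via $\tau\mid_u = \mu^{(u)}$, and the resulting $M_\tau = \prod_u m_{\mu^{(u)}}(\mathbf{x}_{u*})$ lies in $\PS^n$ precisely when $\sum_u u\,|\mu^{(u)}| = n$. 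Thus the combined coefficient is $\prod_u s^\tau_{\balpha}(u) = \mathcal{S}^\tau_{\balpha}$, giving the claimed formula.

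The main obstacle here is less conceptual than notational: one must verify that the padding convention on rows, the indexing of column sums as the sequence $\tau\mid_u$ (rather than as a multiset), and the way stack partitions package the data $(\tau\mid_u)_u$ all line up exactly with the classical monomial product rule. Once that bookkeeping is confirmed, the identity follows immediately, and Example~\ref{ex:1} serves as a useful sanity check of the indexing conventions.
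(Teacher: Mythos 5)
Your argument is correct and follows essentially the same route as the paper: both work degree by degree, identify the coefficient of $M_{u^{\tau\mid_u}}$ in $\prod_i m_{\alpha_i\mid_u}(\mathbf{x}_{u*})$ with the matrix count $s^\tau_{\balpha}(u)$ via the classical monomial product rule, and then multiply over $u$ using the disjointness of the variable sets $\mathbf{x}_{u*}$. The only cosmetic difference is that you cite the classical rule for products of monomial symmetric functions as a black box, whereas the paper rederives it inline by extracting monomial coefficients.
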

\begin{proof}
Fix a positive integer $u$ and a stack partition $\tau\Vdash n$. Write $\balpha=(\alpha_1,\ldots,\alpha_r)$.  
For each $i$, choose a term from $M_{u^{\alpha_i\mid_u}}$ of the form
\begin{align}
    x_{u,1}^{a_{i1}}x_{u,2}^{a_{i2}}\cdots x_{u,\ell}^{a_{i\ell}},
\end{align}
where $\ell=\ell(\tau\mid_u)$ and $(a_{i1},\ldots,a_{i\ell})$ is a permutation of $\alpha'_i\mid_u$. Then a term in $\prod_{i=1}^s M_{u^{\alpha_i\mid_u}}$ is obtained by choosing terms $x_{u,1}^{a_{i1}}x_{u,2}^{a_{i2}}\cdots x_{u,\ell}^{a_{i\ell}}$ from each $M_{u^{\alpha_i\mid_u}}$ such that
\begin{align}
    \prod_{i=1}^r \big(x_{u,1}^{a_{i1}}x_{u,2}^{a_{i2}}\cdots x_{u,\ell}^{a_{i\ell}}\big)
 = \prod_{j=1}^{\ell} x_{u,j}^{\sum_{i=1}^r a_{ij}}.
\end{align}
 A contribution to the coefficient of $M_{u^{\tau\mid_u}}$ occurs precisely when
\begin{align}
    \big(\sum_i a_{i1},\ldots,\sum_i a_{i\ell}\big)=\tau\mid_u.
\end{align}
 This is the same as choosing an $\mathbb N$-matrix $A=(a_{ij})$ whose $i$th row is a permutation of $\alpha'_i\mid_u$ and whose column-sum sequence equals $\tau\mid_u$. The number of such choices is $s^{\tau}_{\balpha}(u)$ by definition.
Variables with distinct degrees $u\neq u'$ are disjoint, so choices at different degrees are independent and hence we can multiply. Therefore
\begin{align}
    \prod_{i=1}^r M_{\alpha_i}
= \sum_{\tau}\Big(\prod_{u} s^{\tau}_{\balpha}(u)\Big)\,M_{\tau} = \sum_{\tau}S_{\balpha}^{\tau}M_{\tau},
\end{align}
and the coefficient of $M_\tau$ is $S_{\balpha}^\tau$, as claimed.
\end{proof}

\begin{example}
    Let $\alpha_1 = (2,2,1^2,1)$, $\alpha_2 = (2,1)$, and $\alpha_3 = (3^3,1)$, as given in Example~\ref{ex:1}. Then
    \begin{equation}
        \begin{aligned}
        M_{221^21} \cdot M_{21} \cdot M_{3^3 1} =\ 
        &18 M_{3^3 222 1^2111} +
         6\, M_{3^3 2^2 2 1^2111} +
        12 M_{3^3 222 1^3 11} + 
         4 M_{3^3 2^2 2 1^31 1}\\
        &+ 18M_{3^3 2 221^21^2 1} +
         6\, M_{3^3 2^2 2 1^2 1^21} +
         3M_{3^3 222 1^41} +
          M_{3^3 2^2 2 1^41}\\
        &+ 9M_{3^3 22 2 1^31^2} +
         3M_{3^3 2^2 2 1^ 31^2}.
    \end{aligned}
    \end{equation}
\end{example}

\subsection{Expressing $P$ in Terms of $M$} In this subsection, we express the $P$-basis in terms of the monomial polysymmetric basis. This description is obtained by decomposing each component of $\tau$ through its stack divisors and combining the resulting matrix coefficients.
\begin{definition} 
Let $\tau = (d_1^{m_1}, d_2^{m_2}, \ldots, d_s^{m_s})$ be a stack partition.  
A stack divisor of $\tau$ is a tuple $k = (k_1, k_2, \ldots, k_s)$ of nonnegative integers of length $\ell(\tau)$ such that each $k_i$ divides $d_i$.  
We write $k \mid \tau$ to indicate that $k$ is a stack divisor of $\tau$.
\end{definition}
\begin{definition}
Let $\tau = (d_1^{m_1}, d_2^{m_2}, \ldots, d_s^{m_s})$ be a stack partition and let $k \mid \tau$.  
Define
\[
D(\tau,k)
  = \big( (k_1^{d_1 m_1 / k_1}),\, (k_2^{d_2 m_2 / k_2}),\, \ldots,\, (k_s^{d_s m_s / k_s}) \big)
\]
to be the sequence of stack partitions associated to $\tau$ and $k$,  
where the $i$th entry $(k_i^{d_i m_i / k_i})$ records the stack partition obtained from the $i$th entry of $\tau$.  
\end{definition}
Finally, for $\alpha\Vdash n$, let
\begin{align}
\mathbf{D}_{\tau,\alpha}
    = \sum_{k\,\mid\,\tau}\pr{\prod_{i=1}^{s} k_i}\,
\mathcal{S}^{\,\alpha}_{\,D(\tau,k)},
\end{align}
where the sum ranges over all stack divisors of $\tau$.
\begin{example}
\label{ex:2}
Let $\tau = (8,6^2,2^3,2,2)$ and $\alpha = (2^7, 1^{14},1^2)$. Among all stack divisors $k\mid\tau$, only the following contribute nontrivially to $\mathcal{S}_{D(\tau,k)}^{\alpha}$:
\begin{align}
    k^{(1)} &= (2, 1, 2, 1, 1), &
    k^{(2)} &= (1, 2, 1, 2, 1), &
    k^{(3)} &= (1, 2, 1, 1, 2).
\end{align}
For these, we have
\begin{align}
    D(\tau,k^{(1)}) &= (2^4,1^{12},2^3,1^2,1^2),\\
    D(\tau,k^{(2)}) &= (1^8,2^6,1^6,2^1,1^2),\\
    D(\tau,k^{(3)}) &= (1^8,2^6,1^6,1^2,2^1).
\end{align}
The corresponding coefficients are
\begin{align}
\mathcal{S}^{\alpha}_{D(\tau,k^{(1)})} = 2, \qquad
\mathcal{S}^{\alpha}_{D(\tau,k^{(2)})} = 1, \qquad
\mathcal{S}^{\alpha}_{D(\tau,k^{(3)})} = 1.
\end{align}
Hence
\begin{align}
\mathbf{D}_{\tau,\alpha}
    = 2(2\cdot1\cdot2\cdot1\cdot1)
     + (1\cdot2\cdot1\cdot2\cdot1)
     + (1\cdot2\cdot1\cdot1\cdot2) = 16.
\end{align}
\end{example}
\begin{theorem}
\label{thm:MxMxM..}
    Let $\tau\Vdash n$, we have
    \begin{align}
        P_{\tau} = \sum_{\alpha\Vdash n} \mathbf{D_{\tau,\alpha}}M_{\alpha}.
    \end{align} 
\end{theorem}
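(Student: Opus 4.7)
The plan is to expand $P_\tau$ directly through the definitions and reduce the problem to a single application of Theorem~\ref{thm:MtimesM}. Starting from $P_\tau = \prod_{i=1}^{s} P_{d_i^{m_i}}$, the substitution convention $P_{d^m}(\mathbf{x}_*) = P_d(\mathbf{x}_*^m)$ combined with the defining expansion $P_d = \sum_{k \mid d} k\, M_{k^{d/k}}$ gives
\begin{align*}
P_{d^m}(\mathbf{x}_*) \;=\; \sum_{k \mid d} k\, M_{k^{d/k}}(\mathbf{x}_*^m).
\end{align*}
The first substantive step is to recognize $M_{k^{d/k}}(\mathbf{x}_*^m)$ as an actual monomial polysymmetric function. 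Since $M_{k^{d/k}} = \sum_j x_{k,j}^{d/k}$, the substitution $x_{i,j}\mapsto x_{i,j}^m$ produces $\sum_j x_{k,j}^{md/k} = M_{k^{md/k}}$, exactly the stack $(k^{dm/k})$ recorded by the construction $D(\tau,k)$ in the $i$th slot.

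Next I would distribute the product over the divisor sums,
\begin{align*}
P_\tau
\;=\; \prod_{i=1}^{s}\, \sum_{k_i \mid d_i} k_i\, M_{k_i^{m_i d_i/k_i}}
\;=\; \sum_{k \mid \tau} \left(\prod_{i=1}^{s} k_i\right) \prod_{i=1}^{s} M_{k_i^{m_i d_i/k_i}},
\end{align*}
where the outer sum is indexed by stack divisors $k=(k_1,\dots,k_s)\mid\tau$. By construction the inner product of monomial polysymmetric functions is indexed by exactly the sequence $D(\tau,k)$, so Theorem~\ref{thm:MtimesM} expands it as $\sum_{\alpha\Vdash n} \mathcal{S}^{\alpha}_{D(\tau,k)}\, M_\alpha$; the index runs over $\alpha\Vdash n$ because each factor $M_{k_i^{m_i d_i/k_i}}$ carries total degree $m_i d_i$ and these sum to $n=|\tau|$.

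Finally, swapping the order of summation collects the coefficient of $M_\alpha$ into
\begin{align*}
\sum_{k \mid \tau} \left(\prod_{i=1}^{s} k_i\right) \mathcal{S}^{\alpha}_{D(\tau,k)} \;=\; \mathcal{D}_{\tau,\alpha},
\end{align*}
which is the claimed expansion. There is no deep obstacle here; the only point requiring care is the bookkeeping between the variable substitution $\mathbf{x}_*\mapsto \mathbf{x}_*^m$ and the multiplicity of the resulting monomial polysymmetric function. Getting this matching right is precisely what guarantees that the stack divisor $k$ and the associated sequence $D(\tau,k)$ line up with the indexing demanded by Theorem~\ref{thm:MtimesM}, after which the identity follows by a clean interchange of sums.
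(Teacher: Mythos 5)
Your proposal is correct and follows essentially the same route as the paper's proof: expand each $P_{d_i^{m_i}}$ as $\sum_{k_i\mid d_i} k_i\,M_{k_i^{d_i m_i/k_i}}$, distribute the product over stack divisors $k\mid\tau$, apply Theorem~\ref{thm:MtimesM} to the resulting product of monomial polysymmetric functions indexed by $D(\tau,k)$, and interchange sums to collect $\mathcal{D}_{\tau,\alpha}$. Your explicit verification that $M_{k^{d/k}}(\mathbf{x}_*^m)=M_{k^{md/k}}$ is a detail the paper leaves implicit, but otherwise the arguments coincide.
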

\begin{proof}
Let $\tau = (d_1^{m_1}, d_2^{m_2}, \ldots, d_s^{m_s})$ and $\mu_i = k_i^{d_im_i/ki}$. By definition, we have
\begin{align}
P_\tau = P_{d_1^{m_1}} \cdot P_{d_2^{m_2}} \cdots P_{d_s^{m_s}} = \left( \sum_{k_1 \mid d_1} k_1 M_{\mu_1} \right) \cdot \left( \sum_{k_2 \mid d_2} k_2 M_{\mu_2} \right) \cdots \left( \sum_{k_s \mid d_s} k_s M_{\mu_s} \right).
\end{align}
 Choosing, for each factor, a divisor $k_i$ of $d_i$, yields a stack divisor of $\tau$. Summing all stack divisors of $\tau$ gives
\begin{align}
\sum_{k\,\mid \,\tau}\pr{\prod_{i=1}^{s} k_i} \cdot \prod_{i=1}^{s} M_{\mu_i}=
\sum_{\boldsymbol{\mu}\Vdash n}
\pr{\sum_{k\,\mid \,\tau}\pr{\prod_{i=1}^{s} k_i} \mathcal{S}^{\boldsymbol{\mu}}_{D(\tau,k)}}  M_{\boldsymbol{\mu}}
= \sum_{\boldsymbol{\mu} \Vdash n} \mathbf{D}_{\tau, \boldsymbol{\mu}} \, M_{\boldsymbol{\mu}},
\end{align}
as desired.
\end{proof}

\subsection{Expressing $H, E^+$ and $E$ in terms of $M$} We now describe how the bases $H$, $E^+$, and $E$ expand in the polysymmetric monomial basis $M$.  
\begin{definition}
Let $\tau = (d_1^{m_1}, d_2^{m_2}, \ldots, d_s^{m_s}) \Vdash n$.  
For each part $\tau_i = d_i^{m_i}$, choose a stack partition $\nu_i$ of $\tau_i$.  
The sequence of stack partitions $\nu = (\nu_1, \nu_2, \ldots, \nu_s)$ is called a refinement of $\tau$.  
We say that $\nu$ is a stable refinement of $\tau$ if, for each $i$, the multiplicity of every stack in $\nu_i$ equals $m_i$.
\end{definition}
Furthermore, define 
\begin{align}
    \operatorname{sgn}_{\tau}(\nu) 
    = \prod_{i=1}^s (-1)^{\area{\nu_i}/m_i}
\end{align}
with $\operatorname{sgn}(\nu_i)$ given by \eqref{eqn:sgn}.
For $\alpha\Vdash n$, let  
\begin{align}
    \mathbf{H}_{\tau,\alpha} &= \sum_{\nu} \mathcal{S}^\alpha_{\nu},
\end{align}
where the sum is over all refinements of $\tau$. Next define
\begin{align}
    \mathbf{E}_{\tau,\alpha} = \sum_{\nu} \mathcal{S}^\alpha_{\nu},\quad
    \mathbf{ES}_{\tau,\alpha} = \sum_{\nu} \sgn{\nu}\mathcal{S}^\alpha_{\nu},
\end{align}
where the sums are over all stable refinements of $\tau$.
\begin{example}
\label{ex:3}
Let $\tau = (5, 3^2, 2)$ and $\alpha = (2^2,1^2,1^2, 1^2, 1,1, 1)$. To compute $\mathbf{H}_{\tau,\alpha}$, we list all refinements $\nu$ of $\tau$ such that $\mathcal{S}^\alpha_{\nu} > 0$.
These are:
\begin{align*}
\nu^1 = (2 1 1 1,\, 1^21^21^2,\,2), \quad \nu^2 = (11 1 1 1,\, 2^21^2,\,11), \quad \nu^3 = (2^21,\, 1^21^21^2,\,11), \\
\nu^4 = (1^2111,\, 2^21^2,\,1^2),\quad \nu^5 = (1^21^21,\, 2^21^2,\,11),
\quad \nu^6 = (1^2111,\, 2^21^2,\,11).
\end{align*}
Therefore $\mathbf{H}_{\tau,\alpha} = \sum_{i=1}^{6} \mathcal{S}^{\lambda}_{\nu^i}= 1+3+3+6+9+18=40.$ 
Now consider the stable refinements $\nu$ of $\tau$ such that $\mathcal{S}^\alpha_{\nu} > 0$.  
In this case, we obtain $\nu^1 = (2 1 1 1,\, 1^21^21^2,\,2)$ and $\nu^2 = (11 1 1 1, 2^21^2,11)$.   
Thus,
\begin{align}
\mathbf{E}_{\tau,\alpha} = \mathcal{S}^\alpha_{\nu^1}+\mathcal{S}^\alpha_{\nu^2} = 1+3=4.
\end{align}
Finally we have
\begin{align}
   \sgn{\nu^1}&= (-1)^{\area{2111}}\cdot (-1)^{\area{1^21^21^2}/2}\cdot (-1)^{\area{2}}=(1)(-1)(-1)=1,\\
   \sgn{\nu^2}&= (-1)^{\area{11111}}\cdot (-1)^{\area{2^21^2}/2}\cdot (-1)^{\area{11}}=(-1)(1)(1)=-1.
\end{align}
Then, $\mathbf{ES}_{\tau,\alpha}=\mathcal{S}^\alpha_{\nu^1}-\mathcal{S}^\alpha_{\nu^2}=-2$.
\end{example}
\begin{theorem}
    \label{thm:EandHtoMs}
    Let $\tau\Vdash n$, we have
    \begin{align}
    \label{eqn:HtoM}
    H_{\tau} &= \sum_{\alpha\Vdash n} \mathbf{H}_{\tau,\alpha}M_{\alpha},\\
     \label{eqn:EPtoM}
        E^+_{\tau} &= \sum_{\alpha\Vdash n} \mathbf{E}_{\tau,\alpha}M_{\alpha},\\
         \label{eqn:EtoM}
        E_{\tau} &= \sum_{\alpha\Vdash n} \mathbf{ES}_{\tau,\alpha}M_{\alpha}.
    \end{align}
\end{theorem}
\begin{proof}
We first give the proof for \eqref{eqn:HtoM}. 
Fix $\alpha$. Let $\tau = (\tau_1, \tau_2, \ldots, \tau_s)$. By definition,
\begin{align}
H_{\tau} = H_{\tau_1} \cdot H_{\tau_2} \cdots H_{\tau_s}.
\end{align}
To obtain a term in $H_{\tau}$, for each $\tau_i$ choose a stack partition $\nu_i$ of $\tau_i$. This choice gives the polysymmetric monomial $M_{\nu_i}$ from $H_{\tau_i}$. Multiplying the resulting $M_{\nu_i}$ over all $i$ gives the product
\begin{align}
\prod_{i=1}^sM_{\nu_i} = \sum_{\alpha\Vdash n} \mathcal{S}_{\nu}^{\alpha}M_{\alpha}.
\end{align}
Summing over all refinements $\nu$ of $\tau$ yields the total coefficient $\mathbf{H}_{\tau,\alpha}$ of $M_{\alpha}$ in $H_{\tau}$. Finally, summing over all $\alpha$ gives the expansion of $H_{\tau}$ in the polysymmetric monomial basis. The argument for \eqref{eqn:EPtoM} proceeds analogously, except that the sum ranges over the stable refinements of $\tau$, in \eqref{eqn:EtoM}, the same holds with the additional bookkeeping of signs.  
\end{proof}

\section{The Signed Complete Homogeneous and Power Polysymmetric Bases}
\label{section 4}

This section introduces two new polysymmetric bases, the signed complete homogeneous basis
$\{ H^+_{\tau} \mid \tau \Vdash n \}$ and the signed power basis
$\{ P^+_{\tau} \mid \tau \Vdash n \}$.
We first give the definitions of $H^+$ and $P^+$ and then show that
$\{ H^+_{\tau} \mid \tau \Vdash n \}$ forms a linear basis of $\PS^n$.
We also describe how $H^+_\tau$ and $P^+_\tau$ expand in the monomial polysymmetric basis.

\subsection{The new bases} The $d$\textbf{th signed  complete homogeneous polysymmetric function} is defined as
\begin{align}
    H_d^{+} = \sum_{\alpha\Vdash d} \operatorname{sgn}(\alpha)M_\alpha.
\end{align}

The $d$\textbf{th signed power polysymmetric function} is
\begin{equation} 
P^+_{d} = \sum_{k\mid d}  (-1)^{d/k}\,k\, M_{k^{d/k}},
\end{equation} 
where $k\mid d$ means $k$ divides $d$.
\begin{example}
    Let \( d = 4 \). Then the expansions in the monomial polysymmetric basis are given by
    \begin{align}
        H_4^{+} &= 
        -M_{4} 
        + M_{31} 
        + M_{22} 
        + M_{2^2} 
        - M_{211} 
        - M_{21^2} 
        + M_{1111}+ M_{1^2 11} 
        + M_{1^2 1^2}
        + M_{1^3 1}  
        + M_{1^4}, \\
        P_4^{+} &= 
        -4M_{4} + 2M_{2^2} + M_{1^4}.
    \end{align}
\end{example}
Define $H^+_{d^m}(\x{*})=H_{d}^+(\x{*}^m)\in \PS^{dm}$ and $P^+_{d^m}(\x{*})=P_{d}^+(\x{*}^m)\in \PS^{dm}$. For a stack partition $\tau = (d_1^{m_1}, d_2^{m_2},\ldots, d_s^{m_s})$, define
\begin{align}
    H_{\tau}^+ = \prod_{i=1}^{s} H_{d_i^{m_i}}^+, \quad P_{\tau}^+ = \prod_{i=1}^{s} P_{d_i^{m_i}}^+.
\end{align}
The generating functions for $H_d^+$ and $P_d^+$ are obtained by substituting
$x^m_{i,j} \mapsto -x^m_{i,j}$ into \eqref{gen:H} and \eqref{gen:P}, respectively, yielding
\begin{align}
\label{gen:HP}
    \mathcal{H}^+(t)
    &= \sum_{d=0}^{\infty} H_{d^m}^+ \, t^{md}
    = \prod_{i,j} \pr{1 + x_{i,j}^m t^{mi}}^{-1},\\
    \mathcal{P}^+(t)
    &= \sum_{d=1}^{\infty} P_{d^m}^+ \, t^{md-1}
    = \sum_{d=1}^{\infty}
      \pr{\sum_{i \mid d} i \sum_{j=1}^{\infty} (-x_{i,j}^{m})^{d/i}}
      t^{md-1}.
\end{align}

We now proceed to prove that $\{H^+_{\tau}\}$ is a linear basis of $\PS$.
\begin{theorem}{ \cite[Theorem 3.1]{g2024polysymmetricfunctionsmotivicmeasures}}
\label{thm:E,H are basis}
Each of the sets $\{E_\tau^+\}_{\tau\text{ stack partition}}$, 
$\{H_\tau\}_{\tau\text{ stack partition}}$ 
is a linear basis of $\PS$. 
Each of the sets $\{E_{d^m}^+\}_{d,m = 1}^\infty$, 
$\{H_{d^m}\}_{d, m = 1}^\infty$ 
is an algebraic basis of $\PS$. 
\end{theorem}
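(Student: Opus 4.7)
The plan is to first establish the algebraic basis claims and derive the linear basis claims as a formal consequence. The tensor product structure $\PS = \bigotimes_m \Lambda_{(m)}$, combined with the classical fact that each $\Lambda_{(m)}$ is a polynomial algebra on $\{p_k(\x{m}): k\geq 1\}$ (with $\deg p_k(\x{m}) = mk$), exhibits $\PS$ as a polynomial algebra on $\{p_k(\x{i}): i, k \geq 1\}$, yielding the Hilbert series
\[
\mathrm{Hilb}(\PS, t) = \prod_{i, k \geq 1}(1-t^{ik})^{-1} = \prod_{d, m \geq 1}(1-t^{dm})^{-1}.
\]
Any polynomial algebra on generators of degrees $\{dm: d, m \geq 1\}$ has this same Hilbert series. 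Hence to prove $\{H_{d^m}\}$ (or $\{E^+_{d^m}\}$) is an algebraic basis, it suffices to show it generates $\PS$ as a $\mathbb{Q}$-algebra; algebraic independence is then forced by applying the Hilbert series equality to the natural surjection from the free polynomial algebra.

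To prove generation, I would take the formal logarithm of \eqref{gen:H}. Using $\log(1+Z) = \sum_{n \geq 1}(-1)^{n+1}Z^n/n$ with $Z = \mathcal{H}(t) - 1 \in \mathbb{Q}[H_{d^m}: d \geq 1][[t]]$, every coefficient of $\log\mathcal{H}(t)$ lies in $A := \mathbb{Q}[H_{d^m}: d, m \geq 1]$. Expanding the logarithm directly yields
\[
\log\mathcal{H}(t) = \sum_{i, n \geq 1} \tfrac{1}{n}\, p_{mn}(\x{i})\, t^{imn} = \sum_{\substack{i \geq 1,\, k \geq 1 \\ m \mid k}} \tfrac{m}{k}\, p_k(\x{i})\, t^{ik},
\]
so for each fixed $N$ the coefficient of $t^N$ equals $\sum_{k \mid N,\, m \mid k}(m/k)\,p_k(\x{N/k}) \in A$.

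For fixed $N$, letting $m$ range over the divisors of $N$ produces $d(N)$ elements of $A$, each a rational linear combination of the $d(N)$ power sums $\{p_k(\x{N/k}): k \mid N\}$. Ordering rows $m$ and columns $k$ ascendingly, the transition matrix is upper triangular (since $m \mid k$ implies $m \leq k$) with diagonal entries $m/m = 1$, hence invertible over $\mathbb{Q}$. Solving yields every $p_k(\x{N/k}) \in A$, and varying $N$ recovers all of $\{p_k(\x{i})\}$, so $A = \PS$. The linear basis $\{H_\tau\}$ then consists of monomials in the algebraic basis, matched with stack partitions via $\tau = (d_1^{m_1}, \ldots, d_s^{m_s}) \mapsto \prod_i H_{d_i^{m_i}}$ (a bijection between multisets of pairs $(d,m)$ and stack partitions). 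The $E^+$ case is identical, using
\[
\log \mathcal{E}^+(t) = \sum_{\substack{i \geq 1,\, k \geq 1 \\ m \mid k}} (-1)^{k/m + 1}\, \tfrac{m}{k}\, p_k(\x{i})\, t^{ik},
\]
whose coefficient matrix retains the same upper triangular form with unit diagonal $(-1)^{2}\cdot 1 = 1$. The main obstacle is verifying the triangularity of the transition matrix, but once the divisibility ordering is chosen the argument is a direct check.
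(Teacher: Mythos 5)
Your proof is correct, but there is nothing in the paper to compare it against: the statement is imported verbatim as Theorem~3.1 of \cite{g2024polysymmetricfunctionsmotivicmeasures}, and the paper supplies no proof of its own. Your argument is a sound, self-contained substitute. Both halves check out: (i) since $\PS=\mathbb{Q}[\,p_k(\x{i}) : i,k\ge 1\,]$ with $\deg p_k(\x{i})=ik$, the graded pieces of $\PS$ and of the free polynomial algebra on generators of degrees $dm$ have the same finite dimensions (namely $d(N)$ generators in each degree $N$), so any graded surjection between them is an isomorphism and generation implies algebraic independence; (ii) the coefficient of $t^N$ in $\log\mathcal{H}_m(t)$ lies in $\mathbb{Q}[H_{d^m}: d\ge 1]$ and equals $\sum_{k\mid N,\ m\mid k}(m/k)\,p_k(\x{N/k})$, and the resulting square system indexed by divisors $m\mid N$ (for $m\nmid N$ the coefficient vanishes and contributes nothing, so the system really is $d(N)\times d(N)$) is unitriangular in the divisibility order, recovering every $p_k(\x{i})$ upon varying $N=ik$. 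The passage to the linear bases is also fine because a stack partition is by definition a multiset of stacks $d^m$ (repeated stacks allowed, as in the paper's examples), so stack partitions of $n$ biject with degree-$n$ monomials in the $H_{d^m}$; and the $E^+$ case goes through identically since the diagonal entry $(-1)^{k/m+1}(m/k)$ at $k=m$ equals $1$.
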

\begin{theorem}
    The ring endomorphism $\Omega \colon \PS \to \PS$ 
defined by $\Omega(E^+_{d^m}) = H^+_{d^m}$ 
is an involution. 
\end{theorem}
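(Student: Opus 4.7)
The plan is to exploit the fact, from Theorem~\ref{thm:E,H are basis}, that $\{E^+_{d^m}\}_{d,m \geq 1}$ is an algebraic basis of $\PS$. This makes $\Omega$ well-defined as a ring endomorphism, and it reduces the problem of showing $\Omega^2 = \mathrm{id}$ to verifying $\Omega^2(E^+_{d^m}) = E^+_{d^m}$ on each algebraic generator, which is equivalent to establishing
\[
\Omega(H^+_{d^m}) = E^+_{d^m} \qquad \text{for all } d, m \geq 1.
\]

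The key input will be the reciprocal relation between the generating functions,
\[
\mathcal{E}^+(t)\,\mathcal{H}^+(t) = 1,
\]
which is immediate from comparing the product formulas \eqref{gen:EP} and \eqref{gen:HP}. Extracting the coefficient of $t^{md}$ for $d \geq 1$ yields the convolution identity
\[
\sum_{k=0}^{d} E^+_{k^m}\,H^+_{(d-k)^m} = 0,
\]
with the convention $E^+_{0^m} = H^+_{0^m} = 1$. Isolating the top-degree term on each end of the convolution produces a symmetric pair of recursions
\[
H^+_{d^m} = -E^+_{d^m} - \sum_{k=1}^{d-1} E^+_{k^m}\,H^+_{(d-k)^m}, \qquad E^+_{d^m} = -H^+_{d^m} - \sum_{k=1}^{d-1} H^+_{k^m}\,E^+_{(d-k)^m},
\]
each expressing one family entirely in terms of products of the other.

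From here I would proceed by induction on $d$ for each fixed $m$. The base case $d=1$ gives $H^+_{1^m} = -E^+_{1^m}$, so applying $\Omega$ yields $\Omega(H^+_{1^m}) = -H^+_{1^m} = E^+_{1^m}$. For the inductive step, I would apply the ring homomorphism $\Omega$ to the first recursion, substitute $\Omega(E^+_{k^m}) = H^+_{k^m}$, and invoke the inductive hypothesis $\Omega(H^+_{(d-k)^m}) = E^+_{(d-k)^m}$ for $1 \le k \le d-1$; the resulting expression matches the second recursion and so equals $E^+_{d^m}$. This closes the induction, and the conclusion $\Omega^2 = \mathrm{id}$ follows. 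I do not anticipate a serious obstacle: once one notices that $\mathcal{H}^+(t)$ is defined precisely as the formal multiplicative inverse of $\mathcal{E}^+(t)$, the symmetric pair of recursions makes the induction essentially automatic. The only care needed is in correctly handling the boundary terms at $k = 0$ and $k = d$ when isolating the top-degree terms of the convolution.
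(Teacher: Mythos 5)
Your proposal is correct and takes essentially the same route as the paper: the paper's proof simply invokes the reciprocity of the generating functions \eqref{gen:EP} and \eqref{gen:HP} (i.e.\ $\mathcal{E}^+(t)\,\mathcal{H}^+(t)=1$) and defers to Proposition~3.2 of \cite{g2024polysymmetricfunctionsmotivicmeasures}, which is precisely the symmetric convolution recursion and induction you carry out explicitly. Your version just writes out the details the paper leaves to the citation, including the correct appeal to Theorem~\ref{thm:E,H are basis} for $\Omega$ to be well defined on the algebraic generators.
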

\begin{proof}
     Using $\mathcal{H}^+(t)$ and $\mathcal{E}^+(t)$, the proof follows closely as presented in Proposition 3.2 of \cite{g2024polysymmetricfunctionsmotivicmeasures}.
\end{proof}
\begin{corollary}
    $\{H^+_{d^m}\}_{d, m = 1}^\infty$ is an algebraic basis of $\PS$. 
$\{H^+_\tau\}_{\tau \text{ stack partition}}$ is a linear basis of $\PS$. 
\end{corollary}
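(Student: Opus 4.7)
The proof will exploit the ring involution $\Omega$ from the preceding theorem, which identifies the new family $H^+$ with the already-understood family $E^+$ via an algebra automorphism.

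The plan is as follows. First, I would note that since $\Omega$ is a ring endomorphism satisfying $\Omega \circ \Omega = \mathrm{id}$, it is automatically a $\mathbb{Q}$-algebra automorphism of $\PS$. Algebra automorphisms carry algebraic bases to algebraic bases and linear bases to linear bases, so the entire corollary reduces to the statement that $\Omega$ sends the $E^+$ families to the $H^+$ families with the same indexing.

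For the first assertion, I invoke Theorem~\ref{thm:E,H are basis}, which says $\{E^+_{d^m}\}_{d,m\geq 1}$ is an algebraic basis of $\PS$. Since $\Omega$ is an automorphism of $\PS$ and $\Omega(E^+_{d^m}) = H^+_{d^m}$ by definition, the image $\{H^+_{d^m}\}_{d,m\geq 1}$ is also an algebraic basis.

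For the second assertion, the key observation is that the multiplicative definition $H^+_\tau = \prod_{i=1}^{\ell(\tau)} H^+_{d_i^{m_i}}$ together with the fact that $\Omega$ is a ring homomorphism yields
\begin{align}
H^+_\tau \;=\; \prod_{i=1}^{\ell(\tau)} \Omega(E^+_{d_i^{m_i}}) \;=\; \Omega\!\left(\prod_{i=1}^{\ell(\tau)} E^+_{d_i^{m_i}}\right) \;=\; \Omega(E^+_\tau).
\end{align}
Hence $\{H^+_\tau : \tau \Vdash n\} = \Omega(\{E^+_\tau : \tau \Vdash n\})$ for every $n$, and since the latter is a linear basis of $\PS^n$ (again by Theorem~\ref{thm:E,H are basis}) and $\Omega$ preserves $\PS^n$ (as $\Omega(E^+_{d^m})=H^+_{d^m}$ preserves degree $dm$), the former is also a linear basis of $\PS^n$. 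Taking the union over $n$ gives a linear basis of $\PS$.

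There is no real obstacle here; the only point requiring a moment of care is observing that $\Omega$ is grading-preserving so that it restricts to an isomorphism on each homogeneous component $\PS^n$, which is what turns the linear basis statement from a statement about $\PS$ as a whole into one about each $\PS^n$. Everything else is a one-line consequence of $\Omega$ being an algebra automorphism.
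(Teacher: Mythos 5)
Your proposal is correct and is exactly the intended argument: the paper derives this corollary directly from the preceding involution theorem together with Theorem~\ref{thm:E,H are basis}, using that the algebra automorphism $\Omega$ carries the algebraic basis $\{E^+_{d^m}\}$ to $\{H^+_{d^m}\}$ and, by multiplicativity, the linear basis $\{E^+_\tau\}$ to $\{H^+_\tau\}$. Your added remark that $\Omega$ preserves the grading is a reasonable bit of extra care the paper leaves implicit.
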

\subsection{Monomial expansions for $H^+$ and $P^+$}

Throughout this subsection, we follow the same constructions used to expand $H_\tau$ and
$P_\tau$ in Section~\ref{section 3}, applied here to their signed counterparts
$H^+_\tau$ and $P^+_\tau$ by bookkeeping the appropriate sign factors.

We first express the signed complete homogeneous basis elements $H^+_\tau$ in the monomial polysymmetric basis $M$.
Define
\begin{align}
    \mathbf{HS}_{\tau,\alpha}
    = \sum_{\nu} \operatorname{sgn}_{\tau}(\nu)\,\mathcal{S}^\alpha_{\nu},
\end{align}
where the sum is taken over all refinements $\nu$ of $\tau$.

\begin{theorem}
Let $\tau \Vdash n$. Then
\begin{align}
    H^+_{\tau} = \sum_{\alpha \Vdash n} \mathbf{HS}_{\tau,\alpha}\, M_{\alpha}.
\end{align}
\end{theorem}

\begin{example}
Continuing from Example~\ref{ex:3}, let $\tau = (5, 3^2, 2)$ and
$\alpha = (2^2, 1^2, 1^2, 1^2, 1, 1, 1)$.
As in the computation of $\mathbf{H}_{\tau,\alpha}$, the relevant refinements are the same.
Thus,
\begin{align}
    \mathbf{HS}_{\tau,\alpha}
    = \sum_{i=1}^{6} \operatorname{sgn}_{\tau}(\nu^i)\,\mathcal{S}^{\alpha}_{\nu^i}
    = 1 - 3 + 3 - 6 - 9 - 18 = -32.
\end{align}
\end{example}

We now carry out the analogous construction for the signed power basis.
As in the unsigned case, we express $P^+_\tau$ in the monomial polysymmetric basis by summing over
stack divisors of $\tau$, with the sign determined by the corresponding divisor choice.
Define
\begin{align}
\mathbf{DS}_{\tau,\alpha}
    = \sum_{k\,\mid\,\tau}
      (-1)^{\sum_{i=1}^{s} d_i/k_i}
      \left(\prod_{i=1}^{s} k_i\right)
      \mathcal{S}^{\,\alpha}_{\,D(\tau,k)},
\end{align}
where the sum ranges over all stack divisors $k \mid \tau$.

\begin{theorem}
Let $\tau \Vdash n$. Then
\begin{align}
    P^+_{\tau} = \sum_{\alpha \Vdash n} \mathbf{DS}_{\tau,\alpha}\, M_{\alpha}.
\end{align}
\end{theorem}

\begin{example}
Continuing from Example~\ref{ex:2}, let $\tau = (8,6^2,2^3,2,2)$ and
$\alpha = (2^7, 1^{14},1^2)$.
Using the same stack divisors as in the unsigned computation, we obtain
\begin{align}
\mathbf{DS}_{\tau,\alpha}
    = -2(2\cdot1\cdot2\cdot1\cdot1)
     + (1\cdot2\cdot1\cdot2\cdot1)
     + (1\cdot2\cdot1\cdot1\cdot2)
     = 0.
\end{align}
\end{example}

In this paper, every polysymmetric basis is accompanied by a signed version.
This suggests that the signing operation is not specific to individual constructions,
but is a general feature of polysymmetric bases.
It is natural to expect that future bases in this setting will admit analogous signed variants.

\section{Identities on Polysymmetric Functions}
\label{section 5}
In this section we present our main results. We give several identities between the bases of polysymmetric functions. Many of which are analogues to the classical identities for symmetric functions, but with twists that come from working in the polysymmetric setting.
\begin{remark}
    Independent work of Aditya \cite{khanna2025transitionmatricesplethysticbases},
appearing simultaneously with this paper, develops a combinatorial approach
to polysymmetric functions and derives explicit
transition formulas among 
$H$, $E$, $E^+$, and $P$.
The results of this section establish additional Jacobi--Trudi type determinant
identities, introduces a new involution, and explicit transition formulas
involving the six bases
$H, H^+, E, E^+, P,$ and $P^+$.
\end{remark}
Throughout this section, we work with the case $m = 1$, as all the proofs follow without loss of generality.  This assumption will be used without further mention and the result will be given inherently. In many of the proofs, we also apply the involution $\Omega$ defined in \cite{g2024polysymmetricfunctionsmotivicmeasures} to give the dual identities.

The following theorem shows that the polysymmetric bases $E$ and $H$ satisfy Jacobi--Trudi style determinant formulas.

\begin{theorem}
\label{thm:EdHd}
For any positive integers $d$ and $m$, 
    \begin{align}
    \label{eqn:EdHd}
        E_{d^m} = (-1)^d \det(H_{(1-i+j)^m})_{1 \leqslant i, j \leqslant d}, \\
    \label{eqn:HdEd}
        H_{d^m} = (-1)^d \det(E_{(1-i+j)^m})_{1 \leqslant i, j \leqslant d}.
    \end{align}
\end{theorem}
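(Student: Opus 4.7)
The plan is to derive a polysymmetric Newton-type identity relating the $H$ and $E$ families via generating functions, and then show that both sides of \eqref{eqn:EdHd} satisfy the same recurrence. Multiplying \eqref{gen:H} and \eqref{gen:E} yields immediately $\mathcal{H}(t)\mathcal{E}(t) = 1$, and extracting the coefficient of $t^{mn}$ for each $n\ge 1$, with the conventions $H_{0^m}=E_{0^m}=1$, produces the polysymmetric Newton identity
\begin{align*}
\sum_{k=0}^{n} H_{k^m}\,E_{(n-k)^m} \;=\; 0, \qquad\text{i.e.,}\qquad E_{n^m} \;=\; -\sum_{k=1}^{n} H_{k^m}\, E_{(n-k)^m}.
\end{align*}

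Next I would set $D_d := \det\!\bigl(H_{(1-i+j)^m}\bigr)_{1\le i,j\le d}$ with the convention $H_{k^m}=0$ for $k<0$ and $D_0:=1$, and note that the defining matrix is upper Hessenberg: subdiagonal entries equal $H_{0^m}=1$, entries below the subdiagonal vanish, and the $(i,j)$-entry for $j\ge i$ equals $H_{(j-i+1)^m}$. The key technical step is to establish the recursion
\begin{align*}
D_d \;=\; \sum_{k=1}^{d} (-1)^{k-1}\, H_{k^m}\, D_{d-k}.
\end{align*}
Expanding along the first column, which has only $H_{1^m}$ at position $(1,1)$ and $1$ at position $(2,1)$, gives $D_d = H_{1^m} D_{d-1} - D'$, where $D'$ is the $(d-1)\times(d-1)$ minor obtained by deleting row $2$ and column $1$. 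That minor is again upper Hessenberg with first row shifted to $(H_{2^m}, H_{3^m}, \ldots, H_{d^m})$; expanding $D'$ along its own first column recovers $H_{2^m} D_{d-2}$ together with a further minor of the same shifted shape, and iterating this process yields the alternating sum claimed.

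To conclude \eqref{eqn:EdHd} I would check by induction that $f(d):=(-1)^d D_d$ satisfies the same recurrence as $E_{d^m}$: multiplying the determinantal recursion by $(-1)^d$ gives $f(d) = -\sum_{k=1}^d H_{k^m}\, f(d-k)$, which is exactly the Newton identity applied to the sequence $E_{(\cdot)^m}$; since $f(0)=1=E_{0^m}$, induction on $d$ forces $f(d)=E_{d^m}$ for all $d$, as desired. Identity \eqref{eqn:HdEd} then follows by the identical argument with the roles of $H$ and $E$ swapped, since the Newton identity treats the two families symmetrically and the analogous determinantal recursion $\tilde D_d = \sum_{k=1}^{d}(-1)^{k-1} E_{k^m}\, \tilde D_{d-k}$ for $\tilde D_d := \det(E_{(1-i+j)^m})$ is proved by the same Hessenberg expansion.

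The main obstacle is establishing the sign pattern $(-1)^{k-1}$ in the determinantal recursion: while the iterated cofactor expansion is standard, one must carefully track how each successive Hessenberg submatrix retains its shape (with a first row shifted by one in the $H$-index) so that the alternating signs accumulate correctly. Once this combinatorial bookkeeping is done, the induction matching $f(d)$ with $E_{d^m}$ is immediate from the Newton identity.
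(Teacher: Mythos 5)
Your proof is correct and follows essentially the same route as the paper's: both derive the Newton-type identity $\sum_{k=0}^{d} H_{k^m}E_{(d-k)^m}=0$ from $\mathcal{H}(t)\mathcal{E}(t)=1$ and then expand the upper Hessenberg determinant along its first column, concluding by induction on $d$. The only cosmetic differences are that you isolate the determinantal recursion $D_d=\sum_{k=1}^{d}(-1)^{k-1}H_{k^m}D_{d-k}$ as a standalone step before matching recurrences, and you obtain \eqref{eqn:HdEd} by the symmetry of the argument in $H$ and $E$ rather than by applying the involution $\Omega$ as the paper does; both choices are sound.
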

\begin{proof}
We aim to prove \eqref{eqn:EdHd}. Recall that $\mathcal{H}(t)\mathcal{E}(t) = 1$, so the identity
\begin{align}
\label{eqn:HdEd-i}
\sum_{i=0}^d H_i E_{d-i} = 0 \quad \text{for } d \ge 1
\end{align}
follows by equating coefficients of $t^d$. We proceed by induction on $d$. For the base case $d = 1$, equation~\eqref{eqn:HdEd-i} gives $H_1 E_0 + H_0 E_1 = 0$, so
\begin{align}
E_1 = -H_1 = -\det(H_1),
\end{align}
as desired. Assume the formula holds for all positive integers $k<d$.  
Consider the $d\times d$ matrix
\begin{align}
A = \begin{bmatrix}
H_1 & H_2 & H_3 & \cdots & H_d \\
H_0 & H_1 & H_2 & \cdots & H_{d-1} \\
0   & H_0 & H_1 & \cdots & H_{d-2} \\
\vdots & \vdots & \vdots & \ddots & \vdots \\
0 & 0 & 0 & \cdots & H_1
\end{bmatrix},
\end{align}
whose determinant is $\det(H_{1-i+j})_{1 \le i,j \le d}$. Let $A_{i,j}$ denote the submatrix obtained by deleting the $i$th row and $j$th column of $A$. Expanding along the first column, we obtain
\begin{align}
\det A = H_1 \det A_{1,1} - H_0 \det A_{2,1}.
\end{align}
By the inductive hypothesis, $\det A_{1,1} = (-1)^{d-1} E_{d-1}$, and since $H_0 = 1$, we have
\begin{align}
\det A = (-1)^{d-1} H_1 E_{d-1} - \det A_{2,1}.
\end{align}
We expand $\det A_{2,1}$ in the same manner, applying the inductive hypothesis to each resulting minor. Continuing recursively through all lower order subdeterminants, we obtain
\begin{align}
\det A = (-1)^{d-1} (H_1 E_{d-1} + H_2 E_{d-2} + \cdots + H_d E_0).
\end{align}
Thus,
\begin{align}
\label{eqn:inductivedeterH}
(-1)^{d-1} \det(H_{1-i+j})_{1 \le i,j \le d} = \sum_{i=1}^d H_i E_{d-i}.
\end{align}
Substituting \eqref{eqn:HdEd-i} into this expression gives
\begin{align}
E_d = -\sum_{i=1}^d H_i E_{d-i} = (-1)^d \det(H_{1-i+j})_{1 \le i,j \le d},
\end{align}
as claimed. Applying the involution $\Omega$ yields \eqref{eqn:HdEd}.
\end{proof}
The Jacobi--Trudi style determinant formula also holds for the signed versions of the polysymmetric bases $E$ and $H$.

\begin{theorem}
\label{thm:PlusToPlus}
For any positive integers $d$ and $m$, we have
\begin{align}
    \label{eqn:EPinHP}
    E^+_{d^m} &= (-1)^d \det\left(H^+_{(1 - i + j)^m}\right)_{1 \leqslant i, j \leqslant d}, \\
    \label{eqn:HPinEP}
    H^+_{d^m} &= (-1)^d \det\left(E^+_{(1 - i + j)^m}\right)_{1 \leqslant i, j \leqslant d}.
\end{align}
\end{theorem}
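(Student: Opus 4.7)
The plan is to mimic the proof of Theorem~\ref{thm:EdHd} line for line, using the signed generating functions in place of the unsigned ones. The key observation is that
\begin{align}
\mathcal{H}^+(t)\,\mathcal{E}^+(t) \;=\; \prod_{i,j}(1+x_{i,j}t^i)^{-1}\cdot\prod_{i,j}(1+x_{i,j}t^i) \;=\; 1,
\end{align}
so by the same $m=1$ reduction used in Section~\ref{section 5}, equating coefficients of $t^d$ produces the Newton-type recurrence
\begin{align}
\sum_{i=0}^{d} H^+_i\,E^+_{d-i} \;=\; 0 \qquad (d\geq 1).
\end{align}

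First I would verify the base case $d=1$, where the recurrence gives $E^+_1 = -H^+_1 = -\det(H^+_1)$, matching the $(-1)^d$ sign. For the inductive step I would form the lower-triangular Toeplitz matrix with $(i,j)$-entry $H^+_{1-i+j}$ (declaring $H^+_k=0$ for $k<0$ and $H^+_0=1$), and expand its determinant along the first column exactly as in \eqref{eqn:inductivedeterH}. Repeated application of the inductive hypothesis to the successive $(2,1)$-minors telescopes the expansion into
\begin{align}
(-1)^{d-1}\det\!\bigl(H^+_{1-i+j}\bigr)_{1\leq i,j\leq d} \;=\; \sum_{i=1}^{d} H^+_i\,E^+_{d-i},
\end{align}
which combined with the Newton-type recurrence above yields \eqref{eqn:EPinHP}. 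The general $m$ case is immediate from the convention $F_{d^m}(\x{*})=F_d(\x{*}^m)$ applied to $F\in\{H^+,E^+\}$, exactly as in the generating-function proof of Proposition~3.1.

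To obtain \eqref{eqn:HPinEP}, I would apply the involution $\Omega$ to \eqref{eqn:EPinHP}. Since $\Omega$ is a ring endomorphism satisfying $\Omega(E^+_{d^m})=H^+_{d^m}$ and $\Omega(H^+_{d^m})=E^+_{d^m}$ (by involutivity), it commutes with the polynomial operation of taking a determinant of entries in $\PS$, and so swapping every $H^+$ with $E^+$ in \eqref{eqn:EPinHP} produces \eqref{eqn:HPinEP}.

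I do not foresee a real obstacle: the argument is structurally identical to Theorem~\ref{thm:EdHd}, and the only ingredient that had to be swapped out was the generating-function identity, which in the signed setting is if anything cleaner since both $\mathcal{H}^+$ and $\mathcal{E}^+$ share the same factorization over $(1+x_{i,j}t^i)$. The mild bookkeeping point to watch is that $\Omega$ is only declared to act on the algebraic generators $E^+_{d^m}$ and $H^+_{d^m}$, so I should not attempt to deduce \eqref{eqn:HPinEP} by applying $\Omega$ to Theorem~\ref{thm:EdHd}, whose entries involve the unsigned $H$ and $E$; the self-contained induction is the safer route.
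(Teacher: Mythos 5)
Your proposal is correct and follows essentially the same route as the paper: the paper's proof likewise invokes the identity $\mathcal{H}^+(t)\mathcal{E}^+(t)=1$, runs the induction of Theorem~\ref{thm:EdHd} with $H^+,E^+$ in place of $H,E$, and obtains \eqref{eqn:HPinEP} by applying $\Omega$ to \eqref{eqn:EPinHP}. Your extra care about not applying $\Omega$ to the unsigned identity is a sensible (and correct) refinement of the same argument.
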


\begin{proof}
As in Theorem~\ref{thm:EdHd}, equation~\eqref{eqn:EPinHP} follows by the same induction argument, using
$\mathcal{H}^+(t)\mathcal{E}^+(t)=1$ and replacing $H$ and $E$ with $H^+$ and $E^+$, respectively, throughout.
Applying the involution $\Omega$ then yields \eqref{eqn:HPinEP}.
\end{proof}
The next theorem provides essential identities that complete the relationships among the four bases $H, E, H^+,$ and $E^+$.
\begin{theorem}
For any positive integers $d$ and $m$, we have
\begin{align}
    \label{eqn:H2EPtoH}
    H_{d^m} &= \sum_{k=0}^{\left\lfloor \frac{d}{2} \right\rfloor} H_{k^{2m}} E^+_{(d - 2k)^m}, \\
    \label{eqn:H2EtoHP}
    H^+_{d^m} &= \sum_{k=0}^{\left\lfloor \frac{d}{2} \right\rfloor} H_{k^{2m}} E_{(d - 2k)^m}, \\
    \label{eqn:E2HPtoE}
    E_{d^m} &= \sum_{k=0}^{\left\lfloor \frac{d}{2} \right\rfloor} E_{k^{2m}} H^+_{(d - 2k)^m}, \\
    \label{eqn:E2HtoEP}
    E^+_{d^m} &= \sum_{k=0}^{\left\lfloor \frac{d}{2} \right\rfloor} E_{k^{2m}} H_{(d - 2k)^m}.
\end{align}
\end{theorem}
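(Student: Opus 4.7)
The plan is to prove all four identities uniformly by recognizing each right-hand sum as a Cauchy product of two of the generating functions from Proposition 2.1 and equation \eqref{gen:HP}. Following the style used in those earlier proofs, I first reduce to $m=1$: each identity at general $m$ follows from its $m=1$ case by the substitution $x_{i,j}\mapsto x_{i,j}^m$, $t\mapsto t^m$.

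Fix $m=1$. Multiplying \eqref{eqn:H2EPtoH} by $t^d$ and summing over $d\geq 0$, the right-hand side becomes the Cauchy product
\[
\Bigl(\sum_{k\geq 0} H_{k^{2}}\,t^{2k}\Bigr)\cdot\Bigl(\sum_{j\geq 0} E^+_{j}\,t^{j}\Bigr)
= \prod_{i,j}(1-x_{i,j}^{2}t^{2i})^{-1}\cdot\prod_{i,j}(1+x_{i,j}t^{i}),
\]
where the first factor is $\mathcal{H}(t)$ taken at $m=2$ (so that the exponent $2k$ matches the degree of $H_{k^{2}}$) and the second is $\mathcal{E}^+(t)$ at $m=1$. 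Writing $y = x_{i,j}t^i$, the elementary factorization $(1-y^2)^{-1}(1+y) = (1-y)^{-1}$ collapses this product to $\prod_{i,j}(1-x_{i,j}t^i)^{-1} = \mathcal{H}(t) = \sum_d H_d t^d$, proving \eqref{eqn:H2EPtoH}. The other three identities are handled by the same mechanism with the same underlying factorization $1-y^2=(1-y)(1+y)$: \eqref{eqn:H2EtoHP} uses $(1-y^2)^{-1}(1-y) = (1+y)^{-1}$, \eqref{eqn:E2HPtoE} uses $(1-y^2)(1+y)^{-1} = 1-y$, and \eqref{eqn:E2HtoEP} uses $(1-y^2)(1-y)^{-1} = 1+y$.

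Equivalently, once \eqref{eqn:H2EPtoH} and \eqref{eqn:H2EtoHP} are established, the remaining two identities can be obtained by applying the involution $\Omega$, which exchanges $H\leftrightarrow E$ and $H^+\leftrightarrow E^+$. The only bookkeeping point to watch is that the outer factor $\sum_k H_{k^{2}}t^{2k}$ (respectively its $E$-analogue) is indeed $\mathcal{H}(t)$ (respectively $\mathcal{E}(t)$) evaluated at $m=2$, so that the exponents of $t$ on the two sides agree. Beyond this there is no substantive obstacle, since the entire theorem reduces to the single identity $1-y^2=(1-y)(1+y)$ applied factor by factor.
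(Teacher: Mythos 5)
Your proposal is correct and follows essentially the same route as the paper: both recognize $\sum_k H_{k^2}t^{2k}$ as $\prod_{i,j}(1-x_{i,j}^2t^{2i})^{-1}$, factor via $1-y^2=(1-y)(1+y)$, multiply by the appropriate $\mathcal{E}^{\pm}$ or $\mathcal{H}^{\pm}$ generating function, and equate coefficients of $t^d$ (with the remaining cases obtained either directly or via the involution $\Omega$, exactly as the paper does).
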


\begin{proof}
We begin with the generating function
\begin{align}
    \sum_d H_{d^2} t^{2d} &= \prod_{i,j} (1 - x_{i,j}^{2} t^{2i})^{-1} = \prod_{i,j} (1 - x_{i,j} t^i)^{-1} (1 + x_{i,j} t^i)^{-1}.
\end{align}
Multiplying both sides by \( \mathcal{E}^+(t) \), we obtain
\begin{align}
    \left( \sum_d H_{d^2} t^{2d} \right) \left( \sum_d E^+_d t^d \right) 
    &= \prod_{i,j} (1 - x_{i,j} t^i)^{-1}.
\end{align}
Thus, we have the identity
\begin{align}
\label{eqn:H2EPH}
    \left( \sum_d H_{d^2} t^{2d} \right) \left( \sum_d E^+_d t^d \right) = \sum_d H_d t^d.
\end{align}
Equating coefficients of \( t^d \) gives equation~\eqref{eqn:H2EPtoH}. Replacing \( \mathcal{E}^+(t) \) with \( \mathcal{E}(t) \) yields equation~\eqref{eqn:H2EtoHP}. Finally, applying the involution $\Omega$ to equations~\eqref{eqn:H2EPtoH} and~\eqref{eqn:H2EtoHP} yields equations~\eqref{eqn:E2HPtoE} and~\eqref{eqn:E2HtoEP}, respectively.
\end{proof}

Given a partition $\lambda$ and a stack partition $\tau$, let $m_i(\lambda)$ denote the number of times the part $i$ appears in $\lambda$, and let $m_{i,j}(\tau)$ denote the number of times the term $i^j$ appears in $\tau$.
 For each $j$, define
\begin{align}
    N_j(\tau) = \sum_i m_{i,j}(\tau),
\end{align}
the total number of terms in $\tau$ whose multiplicity is $j$. Then define
\begin{align}
    \kappa_\tau = \prod_j \binom{N_j(\tau)}{m_{1,j}(\tau),\, m_{2,j}(\tau),\, \ldots}.
\end{align}

\begin{example}
Let
$\tau = (3^3,  3^3,2^3,2^3,2^3,2^3, 2^2, 2^2, 1^3,1^2, 1^1, 1^1).
$
The nonzero values of $m_{i,j}(\tau)$ are
\begin{align*}
m_{3,3}(\tau) &= 2, &
m_{2,3}(\tau) &= 4, &
m_{2, 2}(\tau) &= 2, \\
m_{1,3}(\tau) &= 1, &
m_{1,2}(\tau) &= 1, &
m_{1,1}(\tau) &= 2.
\end{align*}
Hence
\begin{align*}
N_3(\tau) &= m_{1,3}(\tau)+m_{2,3}(\tau)+m_{3,3}(\tau) = 7, \\
N_2(\tau) &= m_{1,2}(\tau)+m_{2,2}(\tau) = 3, \\
N_1(\tau) &= m_{1,1}(\tau) = 2.
\end{align*}
It follows that $\kappa_\tau
=
\binom{7}{1,4,2}
\binom{3}{1,2}
\binom{2}{2}
= 105$.
\end{example}

For a stack partition $\tau$, we write $\tau^m$ for the stack partition obtained by multiplying
each multiplicity in $\tau$ by $m$.

\begin{example}
If $\tau = (3^5,2^3,1^1)$, then $\tau^4 = (3^{20},2^{12},1^4)$.
\end{example}

We are now ready to use $\kappa_\tau$ to express the signed bases
$E^+_{d^m}$ and $H^+_{d^m}$ in terms of the unsigned bases
$E_{d^m}$ and $H_{d^m}$.

\begin{theorem}
\label{thm:BPtoB}
Given positive integers $d$ and $m$, we have
\begin{align}
\label{eqn:EPinEs}
E^+_{d^m} &= \sum_{\tau} \sgn{\tau}\, \kappa_\tau\, E_{\tau^m}, \\
\label{eqn:HPinHs}
H^+_{d^m} &= \sum_{\tau} \sgn{\tau}\, \kappa_\tau\, H_{\tau^m},
\end{align}
where the sum is over all stack partitions $\tau$ of $d$ such that each term of $\tau$ has multiplicity either $1$ or $2$, and at most one term has multiplicity $2$.
\end{theorem}

\begin{proof}
Expanding the determinant in \eqref{eqn:HdEd} yields
\begin{align}
\det(E_{1-i+j})_{1 \le i,j \le d}
=
\sum_{\lambda\vdash d}
(-1)^{d-\ell(\lambda)}
\binom{\ell(\lambda)}{m_{1}(\lambda),\, m_{2}(\lambda),\, \ldots}\,
E_\lambda,
\end{align}

Substituting this expansion into \eqref{eqn:E2HtoEP} gives
\begin{align*}
E^+_{d}
=
\sum_{k = 0}^{\left\lfloor d/2 \right\rfloor}
E_{k^{2}}
\left(
\sum_{\lambda\vdash d - 2k}
(-1)^{\ell(\lambda)}
\binom{\ell(\lambda)}{m_{1}(\lambda),\, m_{2}(\lambda),\, \ldots}\,
E_\lambda
\right).
\end{align*}
Expanding the outer sum produces terms of the form $E_\tau$, where
$\tau \Vdash d$ is a stack partition in which each term has multiplicity
either $1$ or $2$, and at most one term has multiplicity $2$, exactly the
stack partitions appearing in \eqref{eqn:EPinEs}.
Applying the involution $\Omega$ to both sides yields \eqref{eqn:HPinHs}.
\end{proof}

A similar result follows when expressing $E^+$ and $H^+$ in terms of $H$ and $E$, respectively.

\begin{theorem}
\label{thm:EPtoH}
Given a positive integer $d$, we have
\begin{align}
\label{eqn:EPinHs}
E^+_{d^m} &= \sum_{\tau} (-1)^{N_2(\tau)} \kappa_\tau H_{\tau^m}, \\
\label{eqn:HPinEs}
H^+_{d^m} &= \sum_{\tau} (-1)^{N_2(\tau)} \kappa_\tau E_{\tau^m},
\end{align}
where the sum is over all stack partitions $\tau \Vdash d$ in which each term has multiplicity in $\{1, 2\}$ and at most one term appears with multiplicity $1$. 
\end{theorem}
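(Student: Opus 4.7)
The plan is to mirror the proof of Theorem \ref{thm:BPtoB}, swapping which of the two factors appearing in the identity \eqref{eqn:E2HtoEP} gets expanded by the determinantal formula. In Theorem \ref{thm:BPtoB} the factor $H_{(d-2k)^m}$ was expanded in the $E$-basis, producing one ``big'' block of multiplicity $2m$ together with several ``small'' blocks of multiplicity $m$. Here I instead expand $E_{k^{2m}}$ in the $H$-basis, which produces several blocks of multiplicity $2m$ together with at most one block of multiplicity $m$. This role-swap is precisely what accounts for the complementary combinatorial condition on $\tau$.

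To prove \eqref{eqn:EPinHs}, I start with \eqref{eqn:E2HtoEP},
\[
E^+_{d^m} = \sum_{k=0}^{\lfloor d/2 \rfloor} E_{k^{2m}} \, H_{(d-2k)^m},
\]
and invoke the determinantal identity \eqref{eqn:EdHd} to write
\[
E_{k^{2m}} = (-1)^k \det\bigl(H_{(1-i+j)^{2m}}\bigr)_{1 \le i,j \le k} = \sum_{|\alpha|=k} (-1)^{\ell(\alpha)} \binom{\ell(\alpha)}{m_{1,1}(\alpha),\, m_{2,1}(\alpha),\, \ldots} H_{(2m)\alpha},
\]
using the same standard expansion of a shifted determinant invoked in the proof of Theorem \ref{thm:BPtoB}, now in the $H$-variables. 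Substituting into the sum, each product $H_{(2m)\alpha}\, H_{(d-2k)^m}$ equals $H_{m\tau}$ where $\tau \Vdash d$ is the unique stack partition whose parts are the parts of $\alpha$ each appearing with multiplicity $2$, together with the additional part $(d-2k)^1$ when $d \neq 2k$. As $(k,\alpha)$ ranges over all valid pairs, $\tau$ ranges over exactly the index set in \eqref{eqn:EPinHs}.

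Next, matching coefficients: $\ell(\alpha) = N_2(\tau)$ gives the sign $(-1)^{N_2(\tau)}$, and since $N_1(\tau) \in \{0,1\}$ the $N_1$-contribution to $\kappa_\tau$ is the trivial factor $\binom{N_1(\tau)}{N_1(\tau)} = 1$, so the remaining multinomial $\binom{\ell(\alpha)}{m_{1,1}(\alpha),\, \ldots}$ coincides with $\kappa_\tau$. This establishes \eqref{eqn:EPinHs}. Finally, applying the involution $\Omega$---which swaps $E^+ \leftrightarrow H^+$ by definition and (as used implicitly in the proof of Theorem \ref{thm:BPtoB}) also swaps $E \leftrightarrow H$---transforms \eqref{eqn:EPinHs} into \eqref{eqn:HPinEs}.

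The main obstacle is simply the bookkeeping that confirms the sign and multinomial produced by the determinantal expansion are precisely $(-1)^{N_2(\tau)}\kappa_\tau$ under the bijection $(k,\alpha) \leftrightarrow \tau$; this verification is symbolic and essentially identical to its analogue in Theorem \ref{thm:BPtoB}.
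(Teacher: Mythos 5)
Your proposal is correct and follows the same route as the paper's (very brief) proof: substitute the determinantal expansion \eqref{eqn:EdHd} of $E_{k^{2m}}$ in the $H$-basis into the relation \eqref{eqn:E2HtoEP}, collect the resulting products $H_{(2m)\alpha}H_{(d-2k)^m}$ into $H_{m\tau}$, and apply $\Omega$ for the dual identity. Your identification of the bijection $(k,\alpha)\leftrightarrow\tau$ and the matching of $(-1)^{\ell(\alpha)}$ with $(-1)^{N_2(\tau)}$ and of the multinomial with $\kappa_\tau$ supplies exactly the bookkeeping the paper leaves implicit.
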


\begin{proof}
The proof proceeds analogously to Theorem \ref{thm:BPtoB}, using the determinant identity \eqref{eqn:EdHd} and the relation \eqref{eqn:E2HtoEP}.
\end{proof}
We now present the inverse relation, expressing $E$ and $H$ in terms of $E^+$ and $H^+$.
\begin{theorem}
\label{thm:EtoHP}
For any positive integer $d$, we have
\begin{align}
\label{eqn:HinEP}
H_{d^m} &= \sum_{\tau} E^+_{\tau^m}, \\
\label{eqn:EinHP}
E_{d^m} &= \sum_{\tau} H^+_{\tau^m},
\end{align}
where the sum is over all stack partitions $\tau$ of $d$ such that each term appears
with a distinct power of $2$ as its multiplicity.
\end{theorem}

\begin{proof}
It is enough to prove \eqref{eqn:HinEP}, since \eqref{eqn:EinHP} then follows by
applying the involution $\Omega$. Write $H_d$ in the monomial polysymmetric basis as
\begin{align}
H_d = \sum_{\alpha}  M_\alpha,
\end{align}
where the sum is over stack partitions $\alpha = (d_1^{m_1}, d_2^{m_2}, \ldots, d_s^{m_s})$ of $d$. Fix such an index $\alpha$. For each $i$, write the multiplicity $m_i$ in binary as a sum of distinct powers of $2$,
\begin{align}
m_i = 2^{i_1} + 2^{i_2} + \cdots + 2^{i_{k_i}},
\qquad i_1 > i_2 > \cdots > i_{k_i} \ge 0.
\end{align}
Replace the stack $d_i^{m_i}$ by the collection
\begin{align}
d_i^{2^{i_1}},\; d_i^{2^{i_2}},\; \ldots,\; d_i^{2^{i_{k_i}}}.
\end{align}

Perform this replacement for each $i$, and then group together all stacks having the
same multiplicity. That is, for each power $2^k$, group together all stacks of the form
$d_j^{2^k}$ that appear. This group contributes a factor
\begin{align}
E^+_{n_k^{2^k}},
\end{align}
where $n_k$ is the sum of the degrees $d_j$ of the stacks in this group.
Repeating this process for all $k$, we obtain a product
\begin{align}
E^+_{\mu} = \prod_k E^+_{n_k^{2^k}},
\end{align}
where $\mu$ is the stack partition formed by reassembling these groups. Each monomial polysymmetric basis element $M_\alpha$ appears in exactly one such
$E^+_\mu$ due to the uniqueness of binary decomposition. Since each $M_\alpha$
appears in exactly one $E^+_\mu$, the sum over all such $E^+_\mu$ recovers all
monomial polysymmetric basis elements of $H_d$ exactly once. This shows that
$H_d$ is equal to the sum of $E^+_\mu$ over all stack partitions $\mu$ with distinct
powers of $2$ as multiplicities, proving \eqref{eqn:HinEP}.
\end{proof}

We next give expansions of $E$ and $H$ in terms of $E^+$ and $H^+$.

\begin{theorem}
\label{thm:BtoB+}
For any positive integer $d$, we have
\begin{align}
\label{eqn:EinEPs}
E_{d^m} &= \sum_{\tau}  (-1)^{\ell(\tau)}\kappa_\tau E^+_{\tau^m}, \\
\label{eqn:HinHPs}
H_{d^m} &= \sum_{\tau}  (-1)^{\ell(\tau)}\kappa_\tau H^+_{\tau^m},
\end{align}
where the sums are over all stack partitions $\tau = (d_1^{m_1}, d_2^{m_2}, \dots, d_s^{m_s})$ of $d$, where each $m_i$ is a power of $2$.
\end{theorem}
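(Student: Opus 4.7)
The plan is to derive \eqref{eqn:EinEPs} for $m=1$ by combining the $H^+$-expansion of $E_d$ from Theorem~\ref{thm:EtoHP} with the $E^+$-expansion of each $H^+$ factor obtained from the Hessenberg determinant \eqref{eqn:HPinEP}. The dual identity \eqref{eqn:HinHPs} will then follow by applying the involution $\Omega$, and the case of general $m$ is recovered by the standard substitution $x_{i,j}\mapsto x_{i,j}^m$, $t\mapsto t^m$.

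First, I would invoke \eqref{eqn:EinHP} to write $E_d = \sum_\mu H^+_\mu$, where $\mu$ ranges over stack partitions of $d$ whose multiplicities are distinct powers of $2$. Parametrizing such $\mu$ by tuples $(a_0,a_1,\ldots)$ of nonnegative integers with $\sum_{k\geq 0} 2^k a_k = d$ (a positive $a_k$ corresponding to a stack $a_k^{2^k}$, and $a_k=0$ to no stack of multiplicity $2^k$), the sum takes the form $E_d = \sum_{(a_k)} \prod_k H^+_{a_k^{2^k}}$. Next, I would expand each factor $H^+_{a_k^{2^k}}$ using the classical Hessenberg determinant expansion applied to \eqref{eqn:HPinEP}, which yields
\[
H^+_{a^{2^k}} = \sum_{\lambda\vdash a}(-1)^{\ell(\lambda)}\binom{\ell(\lambda)}{m_1(\lambda),\,m_2(\lambda),\,\ldots}E^+_{\lambda^{2^k}}.
\]
Substituting and distributing, the nested sums collapse into a single sum over sequences of ordinary partitions $(\lambda_0,\lambda_1,\ldots)$ satisfying $\sum_k 2^k|\lambda_k| = d$.

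Such a sequence corresponds bijectively to a stack partition $\tau$ of $d$ whose multiplicities are all powers of $2$: each part $p$ of $\lambda_k$ contributes a stack $p^{2^k}$ to $\tau$, and conversely. Under this bijection, $\prod_k E^+_{\lambda_k^{2^k}}$ matches $E^+_\tau$, the sign aggregates as $(-1)^{\sum_k \ell(\lambda_k)} = (-1)^{\ell(\tau)}$, and, using $\ell(\lambda_k) = N_{2^k}(\tau)$ together with $m_i(\lambda_k) = m_{i,2^k}(\tau)$, the multinomial factors reassemble as $\prod_j \binom{N_j(\tau)}{m_{1,j}(\tau),\,m_{2,j}(\tau),\,\ldots} = \kappa_\tau$. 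This proves \eqref{eqn:EinEPs}, after which applying $\Omega$ (which swaps $E_{d^m}\leftrightarrow H_{d^m}$ as well as $E^+_{d^m}\leftrightarrow H^+_{d^m}$) yields \eqref{eqn:HinHPs}.

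The main bookkeeping step, and hence the expected obstacle, is verifying that the product of multinomial coefficients over $k$ reorganizes into $\kappa_\tau$ under this bijection. This requires careful indexing: one must track how the parts of each $\lambda_k$ encode the degrees of the stacks of $\tau$ grouped by their shared multiplicity $2^k$, and confirm that the aggregated sign is $(-1)^{\ell(\tau)}$ rather than $\operatorname{sgn}(\tau)$. Once this identification is in place, the proof reduces to the formal substitution and collection argument sketched above.
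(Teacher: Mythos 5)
Your proposal is correct and follows essentially the same route as the paper: both start from the expansion $E_d=\sum_\mu H^+_\mu$ of \eqref{eqn:EinHP} over stack partitions with distinct powers of $2$ as multiplicities, expand each $H^+$ factor via the determinantal identity \eqref{eqn:HPinEP}, use the distinctness of the multiplicities to concatenate the resulting indices without overlap, and finish with the involution $\Omega$. Your treatment of the bookkeeping (the reassembly of the multinomial coefficients into $\kappa_\tau$ and the sign aggregation $(-1)^{\sum_k\ell(\lambda_k)}=(-1)^{\ell(\tau)}$) is in fact more explicit than the paper's.
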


\begin{proof}
We start from \eqref{eqn:EinHP}, which gives
\begin{align}
E_{d} = \sum_{\tau} H^+_{\tau},
\end{align}
where the sum is over all stack partitions $\tau$ of $d$ in which each multiplicity is a distinct power of $2$.
Write such a stack partition as $\tau = (d_1^{m_1}, d_2^{m_2}, \ldots, d_s^{m_s}),$
so that
\begin{align}
H^+_{\tau} = H^+_{d_1^{m_1}} \cdot H^+_{d_2^{m_2}} \cdots H^+_{d_s^{m_s}}.
\end{align}
For each factor $H^+_{d_k^{m_k}}$, apply the determinant expansion \eqref{eqn:HPinEP}, yielding
\begin{align}
E_{d}
= \sum_{\tau} \prod_{k=1}^{s}
\left(
\sum_{\lambda_k \vdash d_k}
(-1)^{\ell(\lambda_k)}
\binom{\ell(\lambda_k)}{m_{1}(\lambda_k),\, m_{2}(\lambda_k),\, \dots}
\,E^+_{\lambda_k^{m_k}}
\right).
\end{align}
Now fix $\tau=(d_1^{m_1},\ldots,d_s^{m_s})$ and choose partitions $\lambda_k \vdash d_k$ for each $k$.
Since the multiplicities $m_1,\ldots,m_s$ are distinct powers of $2$, the product
\begin{align}
E^+_{\lambda_1^{m_1}}\cdot E^+_{\lambda_2^{m_2}}\cdots E^+_{\lambda_s^{m_s}}
\end{align}
is a single basis element $E^+_{\mu}$, where $\mu$ is obtained by concatenating the parts of the $\lambda_k$
and assigning multiplicity $m_k$ to every part coming from $\lambda_k$.
Equivalently, $\mu$ is a stack partition in which each multiplicity is a power of $2$.
Collecting all such terms over all choices of $\tau$ and $\lambda_k$ gives \eqref{eqn:EinEPs}.
Applying the involution $\Omega$ to both sides yields \eqref{eqn:HinHPs}.
\end{proof}

To understand the action of the involution $\Omega$ on $P_\tau$, we examine the generating function $\mathcal{P}(t)$ and its relationships with $\mathcal{H}(t)$ and $\mathcal{E}(t)$. The identities that follow provide the necessary groundwork.

\begin{theorem}
\label{thm:dHd}
For any positive integers $d$ and $m$, we have
\begin{align}
\label{eqn:HPH}
    H_{d^m} = \frac{1}{d}\sum_{i=1}^d P_{i^m} H_{(d-i)^m}.
\end{align}
\end{theorem}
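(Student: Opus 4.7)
The plan is to mimic the classical derivation of Newton's identity $d h_d = \sum_{i=1}^d p_i h_{d-i}$, but using the generating functions $\mathcal{H}(t)$ and $\mathcal{P}(t)$ recorded in \eqref{gen:H} and \eqref{gen:P}. As noted at the start of the section, I will prove the case $m=1$ and the general case then follows by substituting $x_{i,j}\mapsto x_{i,j}^m$ and $t\mapsto t^m$; however, I will carry the parameter $m$ through the computation because the factor of $m$ from differentiating $t^{mi}$ matters.

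First I would take the logarithmic derivative of $\mathcal{H}(t)$. Starting from
\[
\log \mathcal{H}(t) \;=\; -\sum_{i,j}\log\bigl(1 - x_{i,j}^m\, t^{mi}\bigr) \;=\; \sum_{i,j}\sum_{k\geq 1}\frac{x_{i,j}^{mk}\, t^{mik}}{k},
\]
differentiation with respect to $t$ gives
\[
\frac{\mathcal{H}'(t)}{\mathcal{H}(t)} \;=\; \sum_{i,j}\sum_{k\geq 1} m i\, x_{i,j}^{mk}\, t^{mik-1}.
\]
Re-indexing by $d = ik$ so that $k = d/i$ and $i\mid d$, this becomes
\[
\frac{\mathcal{H}'(t)}{\mathcal{H}(t)} \;=\; m\sum_{d\geq 1}\Bigl(\sum_{i\mid d} i\sum_{j}x_{i,j}^{md/i}\Bigr)\, t^{md-1} \;=\; m\,\mathcal{P}(t),
\]
using exactly the definition of $\mathcal{P}(t)$ in \eqref{gen:P}. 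Therefore $\mathcal{H}'(t) = m\,\mathcal{P}(t)\,\mathcal{H}(t)$.

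Next I would equate coefficients. The left-hand side is $\mathcal{H}'(t) = \sum_{d\geq 1} md\, H_{d^m}\, t^{md-1}$, while the right-hand side expands as
\[
m\,\mathcal{P}(t)\mathcal{H}(t) \;=\; m\sum_{d\geq 1}\Bigl(\sum_{i=1}^{d} P_{i^m}\, H_{(d-i)^m}\Bigr)\, t^{md-1},
\]
since the exponents $mi-1$ and $m(d-i)$ add to $md-1$. Extracting the coefficient of $t^{md-1}$ and cancelling the common factor $m$ yields $d\, H_{d^m} = \sum_{i=1}^{d} P_{i^m} H_{(d-i)^m}$, which after dividing by $d$ is the claimed identity.

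The only subtle point is the bookkeeping: one must verify that the factors of $m$ from differentiating $t^{mi}$ match the factors of $m$ arising from reindexing the product $\mathcal{P}(t)\mathcal{H}(t)$ at $t^{md-1}$. Both appear as an overall $m$ and cancel cleanly, so no obstruction arises. I do not expect any convergence or well-definedness issue, as everything takes place in the ring of formal power series in $t$ with coefficients in $\PS$, where the expansions are visibly bounded in each fixed total degree.
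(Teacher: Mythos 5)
Your proposal is correct and follows essentially the same route as the paper: both establish $\mathcal{P}(t)=\tfrac{d}{dt}\log\mathcal{H}(t)$ (the paper by summing the geometric series in the definition of $\mathcal{P}(t)$, you by expanding $\log\mathcal{H}(t)$ directly, which is the same computation read in the opposite direction) and then equate coefficients in $\mathcal{P}(t)\mathcal{H}(t)=\mathcal{H}'(t)$. Your extra bookkeeping of the factor $m$ is a harmless refinement of the paper's blanket reduction to $m=1$.
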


\begin{proof}
Consider the generating function for $P_d$. Letting $d = ir$, we obtain
\begin{align}
    \mathcal{P}(t)
    = \sum_{d=1}^\infty \left( \sum_{i \mid d} i \sum_{j=1} ^\infty x_{i,j}^{d/i} \right) t^{d-1}
    &= \sum_{i=1}^\infty  \sum_{j=1} ^\infty \sum_{r=1}^\infty i x_{i,j}^r t^{ir - 1} \\
    &= \sum_{i=1}^\infty \sum_{j=1} ^\infty  i x_{i,j} t^{i - 1} \sum_{r=0}^\infty (x_{i,j} t^i)^r \\
    &= \sum_{i=1}^\infty \sum_{j=1}^\infty  \frac{d}{dt} \log\left( \frac{1}{1 - x_{i,j} t^i} \right).
\end{align}
Therefore,
\begin{align}
\label{eqn:GenP}
    \mathcal{P}(t)
    = \frac{d}{dt} \log \prod_{i,j} \frac{1}{1 - x_{i,j} t^i}
    = \frac{d}{dt} \log \mathcal{H}(t)
    = \frac{\mathcal{H}'(t)}{\mathcal{H}(t)}.
\end{align}
Multiplying both sides by $\mathcal{H}(t)$ yields
\begin{align}
    \mathcal{P}(t) \cdot \mathcal{H}(t) = \mathcal{H}'(t).
\end{align}
Comparing coefficients of $t^{d-1}$ on both sides gives our result.
\end{proof}
\begin{remark}
Solving for $\mathcal{H}(t)$ in equation \eqref{eqn:GenP} gives
\begin{align}
\label{eqn:GenH=GenP}
\sum_{d=0}^{\infty} H_d t^d
= \exp\left( \sum_{k=1}^{\infty} \frac{P_k t^k}{k} \right)
= \left( \sum_{d=0}^{\infty} E_d t^d \right)^{-1}.
\end{align}
\end{remark}

\begin{theorem}
\label{thm:dEd}
For any positive integers $d$ and $m$, we have
\begin{align}
E_{d^m} = -\frac{1}{d}\sum_{i=1}^d P_{i^m} \, E_{(d-i)^m}.
\end{align}
\end{theorem}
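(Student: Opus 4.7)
The plan is to mirror the generating-function argument used in the proof of Theorem \ref{thm:dHd}, but working with $\mathcal{E}(t)$ in place of $\mathcal{H}(t)$ and exploiting the relation $\mathcal{H}(t)\mathcal{E}(t)=1$. As stated at the start of \secref{5}, it suffices to treat the case $m=1$; the general case follows by the substitution $x_{i,j}\mapsto x_{i,j}^m$, $t\mapsto t^m$ used throughout this section.

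First I would recall from \eqref{eqn:GenP} that
\begin{align}
\mathcal{P}(t)=\frac{\mathcal{H}'(t)}{\mathcal{H}(t)}=\frac{d}{dt}\log\mathcal{H}(t).
\end{align}
Since $\mathcal{H}(t)\mathcal{E}(t)=1$, taking logarithms gives $\log\mathcal{E}(t)=-\log\mathcal{H}(t)$, so differentiating yields
\begin{align}
\frac{\mathcal{E}'(t)}{\mathcal{E}(t)}=-\mathcal{P}(t),
\end{align}
equivalently $\mathcal{E}'(t)=-\mathcal{P}(t)\,\mathcal{E}(t)$.

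Next I would equate the coefficients of $t^{d-1}$ on both sides. The left side contributes $d\,E_d$, since $\mathcal{E}(t)=\sum_{d\geq 0} E_d t^d$. On the right, recalling $\mathcal{P}(t)=\sum_{i\geq 1}P_i\,t^{i-1}$, the Cauchy product $-\mathcal{P}(t)\mathcal{E}(t)$ contributes $-\sum_{i=1}^{d} P_i\,E_{d-i}$ to the coefficient of $t^{d-1}$. Equating and dividing by $d$ gives
\begin{align}
E_d=-\frac{1}{d}\sum_{i=1}^{d} P_i\,E_{d-i},
\end{align}
which is the $m=1$ case of the claim.

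Finally, applying $x_{i,j}\mapsto x_{i,j}^m$ and $t\mapsto t^m$ promotes the identity to the $m$-scaled version and yields $E_{d^m}=-\tfrac{1}{d}\sum_{i=1}^{d} P_{i^m} E_{(d-i)^m}$. I do not expect a serious obstacle here: the only subtle point is keeping the indexing shift in $\mathcal{P}(t)$ straight when reading off the coefficient of $t^{d-1}$, and confirming that the logarithmic derivative identity passes cleanly from $\mathcal{H}$ to $\mathcal{E}$ via $\mathcal{H}\mathcal{E}=1$. Both are routine once the generating function setup from Theorem \ref{thm:dHd} is in hand.
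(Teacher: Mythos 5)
Your proof is correct and follows essentially the same route as the paper: both derive $\mathcal{E}'(t) = -\mathcal{P}(t)\mathcal{E}(t)$ from $\mathcal{H}(t)\mathcal{E}(t)=1$ together with $\mathcal{P}(t)=\frac{d}{dt}\log\mathcal{H}(t)$, and then equate coefficients of $t^{d-1}$. Your write-up is in fact slightly more explicit about the coefficient extraction and the reduction to $m=1$ than the paper's own proof.
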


\begin{proof}
This follows from equation \eqref{eqn:GenP} by substituting
$\mathcal{H}(t)=\mathcal{E}(t)^{-1}$ and equating coefficients of $t^{d-1}$,
exactly as in the proof of Theorem~\ref{thm:dHd}.
\end{proof}

\begin{remark}
Replacing $\mathcal{H}(t)$ with $\mathcal{E}(t)^{-1}$ in equation \eqref{eqn:GenP}
and equating coefficients of $t^{d-1}$ gives
\begin{align}
P_{d^m} = \sum_{i=0}^{d-1} (d-i)\, E_{i^m}\, H_{(d-i)^m}.
\end{align}
\end{remark}

We now describe how the involution $\Omega$ acts on the power polysymmetric functions.

\begin{theorem}
For any stack partition $\tau = (d_1^{m_1}, d_2^{m_2}, \ldots, d_s^{m_s})$, we have
\begin{align}
\Omega(P_\tau) = (-1)^{\ell(\tau)} P_\tau.
\end{align}
\end{theorem}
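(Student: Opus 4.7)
The plan is to reduce the theorem to a single-stack computation by multiplicativity of $\Omega$, and then exploit the logarithmic derivative identity $\mathcal{P}(t) = \mathcal{H}'(t)/\mathcal{H}(t)$ from the proofs of Theorems~\ref{thm:dHd} and \ref{thm:dEd}. Since $\Omega$ is a ring endomorphism and $P_\tau = \prod_{i=1}^s P_{d_i^{m_i}}$ by definition, we have $\Omega(P_\tau) = \prod_{i=1}^s \Omega(P_{d_i^{m_i}})$, so it suffices to prove $\Omega(P_{d^m}) = -P_{d^m}$ for each single stack $d^m$: the full identity then follows from $\prod_{i=1}^s (-1) = (-1)^{\ell(\tau)}$. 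Following the convention of \sect{5}, the actual task reduces to showing $\Omega(P_d) = -P_d$ for $m=1$.

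The crucial auxiliary identity is $\Omega(H_d) = E_d$. To establish it, I would apply $\Omega$ to equation \eqref{eqn:HinEP} of Theorem~\ref{thm:EtoHP}, namely $H_d = \sum_{\tau} E^+_{\tau}$, where the sum ranges over stack partitions of $d$ with multiplicities equal to distinct powers of $2$. Multiplicativity gives $\Omega(E^+_\tau) = \prod_i \Omega(E^+_{d_i^{m_i}}) = \prod_i H^+_{d_i^{m_i}} = H^+_\tau$, so $\Omega(H_d) = \sum_\tau H^+_\tau$, and the final sum is precisely $E_d$ by equation \eqref{eqn:EinHP}. Packaging this for all $d$ yields $\Omega(\mathcal{H}(t)) = \mathcal{E}(t)$ once we extend $\Omega$ coefficient-wise to $\PS[[t]]$, where it remains a ring endomorphism that commutes with $d/dt$ and with inversion of series of unit constant term.

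With $\Omega(\mathcal{H}(t)) = \mathcal{E}(t)$ in hand, apply $\Omega$ to the identity
\begin{align*}
\mathcal{P}(t) = \frac{\mathcal{H}'(t)}{\mathcal{H}(t)} = -\frac{\mathcal{E}'(t)}{\mathcal{E}(t)}
\end{align*}
from the proofs of Theorems~\ref{thm:dHd} and \ref{thm:dEd}. Since $\Omega$ commutes with the formal derivative and with power-series inversion,
\begin{align*}
\Omega(\mathcal{P}(t)) = \frac{\Omega(\mathcal{H}'(t))}{\Omega(\mathcal{H}(t))} = \frac{\mathcal{E}'(t)}{\mathcal{E}(t)} = -\mathcal{P}(t).
\end{align*}
Comparing coefficients of $t^{d-1}$ yields $\Omega(P_d) = -P_d$, finishing the single-stack case and thus the theorem.

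The main obstacle is the auxiliary identity $\Omega(H_d) = E_d$, because $\Omega$ is defined through its action on the signed bases $E^+, H^+$, whereas the generating-function identity for $\mathcal{P}(t)$ is phrased in the unsigned bases $H, E$. However, the symmetry between \eqref{eqn:EinHP} and \eqref{eqn:HinEP} in Theorem~\ref{thm:EtoHP}, where the same sum over $\tau$ relates $H_d$ to $E^+$ and $E_d$ to $H^+$, makes this bridge a one-line consequence of multiplicativity, so the rest of the argument reduces to a routine generating-function manipulation.
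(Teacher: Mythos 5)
Your proposal is correct and follows essentially the same route as the paper: reduce to a single stack via multiplicativity of $\Omega$, use $\mathcal{P}(t)=\mathcal{H}'(t)/\mathcal{H}(t)=-\mathcal{E}'(t)/\mathcal{E}(t)$ together with the fact that $\Omega$ interchanges $\mathcal{H}(t)$ and $\mathcal{E}(t)$ to get $\Omega(P_{d^m})=-P_{d^m}$, and then pick up the factor $(-1)^{\ell(\tau)}$. The one substantive difference is that the paper merely asserts that $\Omega$ swaps $\mathcal{H}$ and $\mathcal{E}$, while you prove it --- a worthwhile addition, since $\Omega$ is defined only on the generators $E^+_{d^m}$ and its effect on $H_d$ is not automatic. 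However, your bridge as written has a circularity inside the paper's own logic: you identify $\Omega(H_d)=\sum_\tau H^+_\tau$ with $E_d$ by citing \eqref{eqn:EinHP}, but the paper proves \eqref{eqn:EinHP} by applying $\Omega$ to \eqref{eqn:HinEP}, i.e.\ by already assuming $\Omega(H_d)=E_d$. The fix is short: \eqref{eqn:EinHP} admits a proof independent of $\Omega$, for instance from $\prod_{k\geq 0}\bigl(\sum_d H^+_{d^{2^k}}t^{2^k d}\bigr)=\prod_{i,j}\prod_{k\geq 0}(1+(x_{i,j}t^i)^{2^k})^{-1}=\prod_{i,j}(1-x_{i,j}t^i)=\mathcal{E}(t)$, using the telescoping identity $\prod_{k\geq 0}(1+z^{2^k})=(1-z)^{-1}$; with that supplied, your argument is complete.
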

\begin{proof}
The involution $\Omega$ interchanges $\mathcal{E}(t)$ and $\mathcal{H}(t)$. Since
\begin{align}
\mathcal{P}(t) = \frac{d}{dt} \log \mathcal{H}(t)
\quad \text{and} \quad
\mathcal{P}(t) = -\frac{d}{dt} \log \mathcal{E}(t),
\end{align}
it follows that $\Omega(P_{d^m}) = -P_{d^m}$ for all positive integers $d$ and $m$.
Hence, for any stack partition $\tau$, we have that $\Omega(P_\tau) = (-1)^{\ell(\tau)} P_\tau$.
\end{proof}

We now give an explicit expansion of the power polysymmetric function $P$ in terms of $H$ and $E$, respectively.

\begin{theorem}
\label{thm:PtoH}
    For any positive integers $d$ and $m$, we have
    \begin{align}
    \label{eqn:PtoH}
        P_{d^m} &= d\sum_{\lambda\vdash d}(-1)^{\ell(\lambda)-1}
        \frac{1}{\ell(\lambda)}
  \binom{\ell(\lambda)}{m_1(\lambda), m_2(\lambda), \ldots}
        H_{\lambda^m},
\\
\label{eqn:PtoE}
  P_{d^m} &= d\sum_{\lambda\vdash d}(-1)^{\ell(\lambda)}
  \frac{1}{\ell(\lambda)}
  \binom{\ell(\lambda)}{m_1(\lambda), m_2(\lambda), \ldots}
  E_{\lambda^m},
    \end{align}
\end{theorem}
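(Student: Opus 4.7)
The plan is to derive \eqref{eqn:PtoH} by logarithmic expansion of the generating function $\mathcal{H}(t)$, and then deduce \eqref{eqn:PtoE} via the involution $\Omega$. Working first in the case $m=1$, I would begin from the identity
\[
\log \mathcal{H}(t) = \sum_{k=1}^{\infty} \frac{P_k t^k}{k},
\]
which is an immediate consequence of \eqref{eqn:GenH=GenP}, and then expand the left-hand side using $\log(1+y) = \sum_{n\geq 1} (-1)^{n-1} y^n/n$ applied to $y = \sum_{d\geq 1} H_d t^d$.

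Next, I would expand each $n$-th power
\[
\left(\sum_{d \geq 1} H_d\, t^d\right)^n = \sum_{(d_1,\ldots,d_n) \in \mathbb{Z}_{\geq 1}^n} H_{d_1}\cdots H_{d_n}\, t^{d_1 + \cdots + d_n},
\]
and group ordered $n$-tuples by their underlying partition. Since the $H_{d_i}$ commute, the number of ordered tuples corresponding to a fixed partition $\lambda$ of length $n$ with multiplicities $m_i = m_i(\lambda)$ is the multinomial $\binom{n}{m_1, m_2, \ldots}$, which yields
\[
\left(\sum_{d \geq 1} H_d\, t^d\right)^n = \sum_{\lambda:\,\ell(\lambda) = n} \binom{n}{m_1, m_2, \ldots} H_\lambda\, t^{|\lambda|}.
\]
Substituting and equating coefficients of $t^d$ on both sides then gives
\[
\frac{P_d}{d} = \sum_{\lambda \vdash d} \frac{(-1)^{\ell(\lambda)-1}}{\ell(\lambda)} \binom{\ell(\lambda)}{m_1, m_2, \ldots} H_\lambda,
\]
which rearranges to \eqref{eqn:PtoH} for $m=1$. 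The general $m$ case then follows by the substitution $x_{i,j} \mapsto x_{i,j}^m$, $t \mapsto t^m$, using $F_{d^m}(\mathbf{x}_*) = F_d(\mathbf{x}_*^m)$ for $F \in \{H, P\}$ and the definition $H_{m\lambda} = \prod_i H_{\lambda_i^m}$.

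For \eqref{eqn:PtoE}, I would apply the ring involution $\Omega$ to \eqref{eqn:PtoH}. Since $\Omega$ satisfies $\Omega(H_{d^m}) = E_{d^m}$ and $\Omega(P_{d^m}) = -P_{d^m}$ (the latter established in the preceding theorem), applying $\Omega$ term-by-term produces $-P_{d^m}$ on the left and converts each $H_{m\lambda}$ into $E_{m\lambda}$ on the right. Absorbing the extra minus sign flips $(-1)^{\ell(\lambda)-1}$ to $(-1)^{\ell(\lambda)}$, delivering \eqref{eqn:PtoE}. The main step requiring care is the combinatorial bookkeeping in grouping ordered tuples into partition-indexed terms via the multinomial coefficient, but this is the standard multinomial expansion; an alternative route would be induction on $d$ using the recursion in Theorem~\ref{thm:dHd}, though the generating function approach is cleaner.
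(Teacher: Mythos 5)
Your proposal is correct and follows essentially the same route as the paper: take the logarithm of the generating function identity $\mathcal{H}(t)=\exp\left(\sum_k P_k t^k/k\right)$, expand $\log(1+y)$, count ordered tuples contributing to $H_\lambda$ via the multinomial coefficient, equate coefficients of $t^d$, and apply $\Omega$ for the $E$-version. Your treatment is in fact slightly more explicit than the paper's in two places it glosses over, namely the reduction of general $m$ to $m=1$ and the sign bookkeeping from $\Omega(P_{d^m})=-P_{d^m}$ when passing to \eqref{eqn:PtoE}.
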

\begin{proof}
By \eqref{eqn:GenH=GenP}, taking the logarithm of both sides gives
\begin{align}
\sum_{k=1}^{\infty}\frac{P_k}{k}t^k
&=
\log\left(1+\sum_{n=1}^{\infty} H_n t^n\right) =
\sum_{\ell=1}^{\infty}\frac{(-1)^{\ell-1}}{\ell}
\left(\sum_{n=1}^{\infty} H_n t^n\right)^{\ell}.
\end{align}
Taking the coefficient of $t^d$ on both sides, the left-hand side contributes
$\frac{P_d}{d}$, while the right-hand side expands as
\begin{align}
\left[t^d\right]
\sum_{\ell=1}^{\infty}\frac{(-1)^{\ell-1}}{\ell}
\left(\sum_{n=1}^{\infty} H_n t^n\right)^{\ell}
&=
\sum_{\lambda\vdash d}
(-1)^{\ell(\lambda)-1}\,
\frac{1}{\ell(\lambda)}\,
\frac{\ell(\lambda)!}{\prod_i m_i(\lambda)!}\,
H_{\lambda} \\
&=
\sum_{\lambda\vdash d}
(-1)^{\ell(\lambda)-1}\,
\frac{(\ell(\lambda)-1)!}{\prod_i m_i(\lambda)!}\,
H_{\lambda}.
\end{align}
Multiplying both sides by $d$ yields \eqref{eqn:PtoH}.
Applying the involution $\Omega$ gives \eqref{eqn:PtoE}.
\end{proof}
The expansion in Theorem~\ref{thm:PtoH} can be rewritten equivalently
as a Jacobi--Trudi type determinant.
\begin{corollary}
\label{cor:Pdet}
For any positive integers $d$ and $m$, we have
\begin{align}
(-1)^{d-1}P_{d^m}
&=
\det\!\begin{pmatrix}
H_{1^m} & 1 & 0 & \cdots & 0 \\
2H_{2^m} & H_{1^m} & 1 & \cdots & 0 \\
\vdots & \vdots & \ddots & \ddots & 1 \\
dH_{d^m} & H_{(d-1)^m} & \cdots & H_{1^m}
\end{pmatrix},
\\
(-1)^{d}P_{d^m}
&=
\det\!\begin{pmatrix}
E_{1^m} & 1 & 0 & \cdots & 0 \\
2E_{2^m} & E_{1^m} & 1 & \cdots & 0 \\
\vdots & \vdots & \ddots & \ddots & 1 \\
dE_{d^m} & E_{(d-1)^m} & \cdots & E_{1^m}
\end{pmatrix}.
\end{align}
\end{corollary}

\begin{definition}
Following standard notation in symmetric function theory, for any partition
$\lambda$, define
\begin{align}
z_\lambda
=
\prod_{i\ge 1} i^{m_i(\lambda)}\, m_i(\lambda)!.
\end{align}
\end{definition}
\begin{example}
Let $\lambda = (4,4,2,1,1,1)$. Then
\begin{align}
m_1(\lambda)=3,\quad m_2(\lambda)=1,\quad m_4(\lambda)=2,
\end{align}
and hence $z_\lambda
= 1^{3}\cdot 3!\cdot 2^{1}\cdot 1!\cdot 4^{2}\cdot 2!
= 384.$
\end{example}

We now express the homogeneous and elementary polysymmetric functions in terms of the power polysymmetric basis.

\begin{theorem}
\label{thm:HandP}
For positive integers $d$ and $m$, we have
\begin{align}
\label{eqn:HandP}
    H_{d^m} &= \sum_{\lambda\vdash d} \frac{1}{z_\lambda} P_{\lambda^m}, \\
\label{eqn:EandP}
    E_{d^m} &= \sum_{\lambda\vdash d} \frac{(-1)^{\ell(\lambda)}}{z_\lambda} P_{\lambda^m}.
\end{align}
\end{theorem}

\begin{proof}
We prove \eqref{eqn:HandP}; the identity \eqref{eqn:EandP} follows by applying the involution $\Omega$ to both sides.
Consider the generating function for the homogeneous polysymmetric functions
\begin{align}
\mathcal{H}(t)
    = \sum_{d=0}^{\infty} H_d t^d
    = \exp\left( \sum_{d=1}^{\infty} P_d \frac{t^d}{d} \right).
\end{align}
This can be rewritten as
\begin{align}
\mathcal{H}(t)
    &= \prod_{d=1}^{\infty} \exp\left( \frac{P_d t^d}{d} \right) \\
    &= \prod_{d=1}^{\infty} \sum_{k_d = 0}^{\infty} \frac{(P_d t^d)^{k_d}}{d^{k_d} k_d!} \\
    &= \sum_{\lambda} \frac{1}{z_\lambda} P_\lambda t^{|\lambda|} \\
    &= \sum_{d=0}^{\infty} \sum_{\lambda \vdash d} \frac{1}{z_\lambda} P_\lambda t^d.
\end{align}
Comparing coefficients of $t^d$ yields \eqref{eqn:HandP}.
\end{proof}
\begin{definition}
Let $\lambda$ and $\mu$ be integer partitions.
Their \emph{union}, denoted $\lambda \cup \mu$, is the partition obtained by
taking the multiset union of the parts of $\lambda$ and $\mu$ and reordering
the result into weakly decreasing order. In particular, for a positive integer $i$, writing $\lambda \cup i$ means
$\lambda \cup (i)$.
\end{definition}
\begin{example}
If $\lambda = (4,2,1)$ and $\mu = (3,2)$, then $\lambda \cup \mu = (4,3,2,2,1).$
\end{example}

Lastly, the transition from $P$ to $E^+$ and $H^+$ yields expansions analogous to those relating $P$ with $E$ and $H$.

\begin{theorem}
\label{thm:PlustoP}
Given any positive integers $d$ and $m$, we have
\begin{align}
\label{eqn:EPtoP}
E_{d^m}^+ &= \sum_{\lambda,\mu} z_\lambda^{-1} z_\mu^{-1} (-1)^{\ell(\mu)} P_{\lambda^m} P_{\mu^{2m}}, \\
\label{eqn:HPtoP}
H_{d^m}^+ &= \sum_{\lambda,\mu} z_\lambda^{-1} z_\mu^{-1} (-1)^{\ell(\lambda)} P_{\lambda^m} P_{\mu^{2m}},
\end{align}
where $\lambda$ and $\mu$ range over partitions such that $|\lambda| + 2|\mu| = d$.
\end{theorem}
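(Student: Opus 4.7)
The plan is to combine the mixed identity \eqref{eqn:E2HtoEP}, which already relates the signed basis $E^+_{d^m}$ to products $E_{k^{2m}}\,H_{(d-2k)^m}$, with the $p$--expansions of $E$ and $H$ provided by \thmref{HandP}. Once $E^+_{d^m}$ is rewritten this way the pair of inner sums collapses into a single sum over $(\lambda,\mu)$ with the constraint $|\lambda|+2|\mu|=d$, and the companion identity for $H^+_{d^m}$ drops out by applying the involution $\Omega$.

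In more detail, first I would rewrite \eqref{eqn:E2HtoEP} as
\begin{align*}
E^+_{d^m} \;=\; \sum_{k=0}^{\lfloor d/2\rfloor} E_{k^{2m}}\, H_{(d-2k)^m}.
\end{align*}
Next I would apply \thmref{HandP} to each factor: the second identity there gives $E_{k^{2m}}=\sum_{\mu\vdash k} z_\mu^{-1}(-1)^{\ell(\mu)} P_{(2m)\mu}$ (by using the parameter $2m$ in place of $m$), and the first identity gives $H_{(d-2k)^m}=\sum_{\lambda\vdash d-2k} z_\lambda^{-1} P_{m\lambda}$. Substituting and interchanging the order of summation yields
\begin{align*}
E^+_{d^m} \;=\; \sum_{k=0}^{\lfloor d/2\rfloor}\;\sum_{\substack{\mu\vdash k \\ \lambda\vdash d-2k}} z_\lambda^{-1}z_\mu^{-1}(-1)^{\ell(\mu)}\, P_{m\lambda}\,P_{(2m)\mu}.
\end{align*}
Reindexing the triple sum by the pair $(\lambda,\mu)$ subject to $|\lambda|+2|\mu|=d$ collapses the parameter $k=|\mu|$ and produces exactly \eqref{eqn:EPtoP}.

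For \eqref{eqn:HPtoP} I would then apply the involution $\Omega$ to both sides of \eqref{eqn:EPtoP}. Since $\Omega(E^+_{d^m})=H^+_{d^m}$ and $\Omega(P_\tau)=(-1)^{\ell(\tau)}P_\tau$, and since $m\lambda$ and $(2m)\mu$ are the stack partitions with $\ell(\lambda)$ and $\ell(\mu)$ stacks respectively, the factor $(-1)^{\ell(\mu)}$ gets multiplied by $(-1)^{\ell(\lambda)+\ell(\mu)}$, yielding an overall sign of $(-1)^{\ell(\lambda)+2\ell(\mu)}=(-1)^{\ell(\lambda)}$, as desired.

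The only subtlety worth double-checking is the bookkeeping on the substitution into \thmref{HandP} with the exponent $2m$, making sure that $E_{k^{2m}}=\sum_{\mu\vdash k} z_\mu^{-1}(-1)^{\ell(\mu)} P_{(2m)\mu}$ really is the correct specialization (it is, by the substitution rule $x_{i,j}\mapsto x_{i,j}^{2m}$, $t\mapsto t^{2m}$ used to pass from the $m=1$ case to general $m$). Beyond that, the proof is a clean assembly of previously established identities with a final application of $\Omega$; I do not anticipate any genuine obstacle.
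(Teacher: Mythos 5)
Your proposal is correct and follows essentially the same route as the paper: the paper works at the level of generating functions, writing $\sum_d E_d^+ t^d = \exp(\sum_k P_k t^k/k)\cdot\exp(-\sum_h P_{h^2}t^{2h}/h)$ and expanding the exponentials, which is exactly the generating-function form of your combination of \eqref{eqn:E2HtoEP} with the two expansions in \thmref{HandP}, followed by the same application of $\Omega$ for \eqref{eqn:HPtoP}. Your explicit sign bookkeeping $(-1)^{\ell(\lambda)+2\ell(\mu)}=(-1)^{\ell(\lambda)}$ under $\Omega$ is a welcome detail the paper leaves implicit.
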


\begin{proof}
We prove \eqref{eqn:EPtoP},  the identity \eqref{eqn:HPtoP} follows by applying $\Omega$.
Consider the generating function
\begin{align}
\sum_{d=0}^\infty H_{d^2} t^{2d}
= \exp\left( \sum_{k=1}^\infty P_{k^2} \frac{t^{2k}}{k} \right).
\end{align}
By \eqref{eqn:H2EPH} and \eqref{eqn:GenH=GenP}, we have
\begin{align}
\exp\left( \sum_{h=1}^\infty P_{h^2} \frac{t^{2h}}{h} \right)
\sum_{d=0}^\infty E_d^+ t^d
= \exp\left( \sum_{k=1}^\infty P_k \frac{t^k}{k} \right).
\end{align}
Solving for the generating function of $E_d^+$ yields
\begin{align}
\label{eqn:EPP^2}
\sum_{d=0}^\infty E_d^+ t^d
= \exp\left( \sum_{k=1}^\infty P_k \frac{t^k}{k} \right)
\exp\left( - \sum_{h=1}^\infty P_{h^2} \frac{t^{2h}}{h} \right).
\end{align}
Expanding both exponentials gives
\begin{align}
\sum_{d=0}^\infty E_d^+ t^d
&= \left( \sum_{\lambda} z_\lambda^{-1} P_{\lambda^m} t^{|\lambda|} \right)
   \left( \sum_{\mu} z_\mu^{-1} (-1)^{\ell(\mu)} P_{\mu^{2m}} t^{2|\mu|} \right) \\
&= \sum_{d=0}^\infty
   \sum_{\substack{\lambda,\mu }}
   z_\lambda^{-1} z_\mu^{-1} (-1)^{\ell(\mu)} P_{\lambda^m} P_{\mu^{2m}} t^d.
\end{align}
where the inner sum is over all $\lambda,\mu$ with $|\lambda|+2|\mu|=d$. Comparing coefficients of $t^d$ yields \eqref{eqn:EPtoP}.
\end{proof}

\begin{theorem}
\label{thm:PtoPlus}
Let $d$ and $m$ be positive integers. Then
\begin{align}
\label{eqn:PEP}
P_{d^m}
&=
d\sum_{r \mid d}
\ \sum_{\lambda \vdash d/r}
(-1)^{\ell(\lambda)-1}
\frac{1}{\ell(\lambda)}
\binom{\ell(\lambda)}{m_{1}(\lambda),\,m_{2}(\lambda),\,\ldots}
E^+_{\lambda^{rm}},
\\
\label{eqn:PHP}
P_{d^m}
&= d
\sum_{r \mid d}
\ \sum_{\lambda \vdash d/r}
(-1)^{\ell(\lambda)}
\frac{1}{\ell(\lambda)}
\binom{\ell(\lambda)}{m_{1}(\lambda),\,m_{2}(\lambda),\,\ldots}
H^+_{\lambda^{rm}},
\end{align}
where in both sums $r$ ranges over all divisors of $d$ that are powers of $2$.
\end{theorem}
\begin{proof}
It suffices to prove \eqref{eqn:PEP}, since \eqref{eqn:PHP} follows by applying the involution $\Omega$.
Taking the logarithm of both sides of \eqref{eqn:EPP^2} gives
\begin{align}
\sum_{d=1}^\infty P_d \frac{t^d}{d}
-
\sum_{d=1}^\infty P_{d^2} \frac{t^{2d}}{d}
=
\log(\mathcal E^+(t)).
\end{align}
   Differentiating with respect to $t$ and applying
\eqref{eqn:HPinEP} to expand $H^+$ in terms of $E^+$, we obtain

\begin{align}
\sum_{d=1}^\infty P_d t^{d-1}
-
\sum_{d=1}^\infty 2 P_{d^2} t^{2d-1}
&=
(\mathcal E^+(t))' \mathcal H^+(t)
\\
&=
\left( \sum_{i=1}^\infty i E_i^+ t^{i-1} \right)
\left(
\sum_{j=0}^\infty
\left(
\sum_{|\lambda|=j}
(-1)^{\ell(\lambda)}
\binom{\ell(\lambda)}{m_{1}(\lambda),\,m_{2}(\lambda),\,\ldots}
E_\lambda^+
\right)
t^j
\right).
\end{align}

We now compare coefficients of $t^{d-1}$.
Writing $\nu = \lambda \cup i$, where $i>0$ and $|\lambda|+i=d$, we have
$\ell(\nu)=\ell(\lambda)+1$, and hence
$(-1)^{\ell(\lambda)} = (-1)^{\ell(\nu)-1}$.
This gives
\begin{align}
\label{eqn:PdPiecewiseClean}
P_d
=
\sum_{i+j=d}
\sum_{|\lambda|=j}
(-1)^{\ell(\nu)-1}
i
\binom{\ell(\lambda)}{m_{1}(\lambda),\,m_{2}(\lambda),\,\ldots}
E_\nu^+
+
\begin{cases}
2P_{(d/2)^{2}}, & \text{if $d$ is even},\\
0, & \text{if $d$ is odd}.
\end{cases}
\end{align}
Here, the additional term $2P_{(d/2)^2}$ arises from the second summation on the left-hand side. Moreover,
\begin{align}
i
\binom{\ell(\lambda)}{m_{1}(\lambda),\,\ldots}
=
\frac{i}{\ell(\nu)}
\binom{\ell(\nu)}{m_{1}(\nu),\,\ldots,m_{i}(\nu)-1,\ldots}
=
\frac{i m_{i}(\nu)}{\ell(\nu)}
\binom{\ell(\nu)}{m_{1}(\nu),\,\ldots}.
\end{align}
Summing over all $i$, and using the identity
$d=\sum_i i m_{i}(\nu)$, we obtain
\begin{align}
\sum_i
\frac{i m_{i}(\nu)}{\ell(\nu)}
\binom{\ell(\nu)}{m_{1}(\nu),\,\ldots}
=
\frac{d}{\ell(\nu)}
\binom{\ell(\nu)}{m_{1}(\nu),\,\ldots}.
\end{align}

Putting everything together gives
\begin{align}
\label{eqn:PdPiecewise}
P_d
&=
d\sum_{\nu \vdash d}
(-1)^{\ell(\nu)-1}
\frac{1}{\ell(\nu)}
\binom{\ell(\nu)}{m_{1}(\nu),\,m_{2}(\nu),\,\ldots}
E_\nu^+
+
\begin{cases}
2P_{(d/2)^{2}}, & \text{if $d$ is even},\\
0, & \text{if $d$ is odd}.
\end{cases}
\end{align}

When $d$ is odd, the additional term does not appear, so only the contribution with $r=1$ occurs, yielding the desired formula.
When $d$ is even, the term $2P_{(d/2)^2}$ contributes, corresponding to the case $r=2$.
Applying the same expansion recursively produces contributions with $r=4,8,\ldots$, continuing through all powers of $2$ dividing $d$.
Thus, the expansion yields precisely the sum over all divisors $r\mid d$ that are powers of $2$, as stated in Theorem~\ref{thm:PtoPlus}.
The factor $2$ at each stage ensures that the coefficient in front remains $d$, completing the proof.
\end{proof}

We define the involution $\Omega^+$ by $\Omega^+(x_{i,j})=-x_{i,j}$.
Although many signed identities are formally related to their unsigned
counterparts via $\Omega^+$, the generating function proofs from
$P$ to $E$ and $P$ to $H$ do not transfer directly.
Indeed, those proofs rely on multiplicative factorizations of the form
$(1-x_{i,j}t)^{-1}=(1-x_{i,j}^2t^2)^{-1}(1+x_{i,j}t)$, which are not preserved
after applying $\Omega^+$.

For this reason, while we state several signed analogues here, we provide
separate proofs of the $P^+$–$E$ and $P^+$–$H$ expansions

\begin{theorem}
\label{thm:PP}
    For positive integers $d$ and $m$, we have
   \begin{align}
   \Omega(P^+_\tau) &= (-1)^{\ell(\tau)} P^+_\tau.\\
\label{eqn:HPtoPPHP}
        H^+_{d^m} &= \frac{1}{d}\sum_{i=1}^d P^+_{i^m} H^+_{(d-i)^m}.\\
\label{eqn:PPtoEPHP}
    P^+_{d^m} &= \sum_{i=0}^{d-1} (d - i) E^+_{i^m} H^+_{(d - i)^m}.\\
\label{eqn:EPtoPPEP}
     E^+_{d^m} &= -\frac{1}{d}\sum_{i=1}^d P^+_{i^m} \, E^+_{(d - i)^m}.\\
\label{eqn:PPtoHP}
        P^+_{d^m} &= \sum_{\lambda\vdash d}(-1)^{\ell(\lambda)-1}
        \frac{d}{\ell(\lambda)}
  \binom{\ell(\lambda)}{m_1(\lambda), m_2(\lambda), \ldots}
        H^+_{\lambda^m},
\\
\label{eqn:PPtoEP}
  P^+_{d^m} &= \sum_{\lambda\vdash d}(-1)^{\ell(\lambda)}
  \frac{d}{\ell(\lambda)}
  \binom{\ell(\lambda)}{m_1(\lambda), m_2(\lambda), \ldots}
  E^+_{\lambda^m},\\
\label{eqn:HPtoPP}
    H^+_{d^m} &= \sum_{\lambda\vdash d} \frac{1}{z_\lambda} P^+_{\lambda^m}, \\
\label{eqn:EPtoPP}
    E^+_{d^m} &= \sum_{\lambda\vdash d} \frac{(-1)^{\ell(\lambda)}}{z_\lambda} P^+_{\lambda^m}.
\end{align}
\end{theorem}

\begin{theorem}
\label{thm:PPP}
For positive integers $d$ and $m$, we have
\begin{align}
\label{thm:PPtoP}
P^+_{d^m}
&=
\begin{cases}
-P_{d^m}, & \text{if $d$ is odd},\\
-P_{d^m} + 2P_{(d/2)^{2m}}, & \text{if $d$ is even},
\end{cases}
\\
\label{thm:PtoPP}
P_{d^m}
&=
-\sum_{j}
2^j \, P^+_{(d/2^j)^{2^j m}},
\end{align}
where the sum is over all non-negative integers $j$ such that $2^j \mid d$.
\end{theorem}

\begin{proof}
We first prove \eqref{thm:PPtoP}.
If $d$ is odd, then every monomial appearing in $P_{d^m}$ has odd total degree,
so the involution $x_{i,j}\mapsto -x_{i,j}$ sends $P_{d^m}$ to $-P_{d^m}$.
By definition of $P^+$, this gives $P^+_{d^m}=-P_{d^m}$.

Now assume $d$ is even.
Applying $\Omega^+$ to \eqref{eqn:PtoE} yields an expansion of $P^+_{d^m}$ in $E^+$ with the same coefficients as in the expansion of $P_{d^m}$
coming from \eqref{eqn:PdPiecewise}, up to an overall sign.
Comparing these expressions shows that
\begin{align}
P^+_{d^m}
&=
-P_{d^m} + 2P_{(d/2)^{2m}},
\end{align}
which proves \eqref{thm:PPtoP}. To obtain \eqref{thm:PtoPP}, we solve \eqref{thm:PPtoP} for $P_{d^m}$ in the even case,
and substitute repeatedly while the index remains even.
This yields
\begin{align}
P_{d^m}
&=
-\sum_{j}
2^j\,P^+_{(d/2^j)^{2^j m}},
\end{align}
where the sum ranges over all $j\ge 0$ such that $2^j\mid d$.
\end{proof}

\begin{theorem}
\label{thm:PPtoEH}
Let $d$ and $m$ be positive integers. Then
\begin{align}
\label{eqn:PPtoE}
P^+_{d^m}
&=
d\sum_{\substack{r\in\{1,2\}\\ r\mid d}}
\ \sum_{\lambda\vdash d/r}
(-1)^{\ell(\lambda)+r}
\frac{1}{\ell(\lambda)}
\binom{\ell(\lambda)}{m_1(\lambda),m_2(\lambda),\ldots}\,
E_{\lambda^{rm}},\\
\label{eqn:PPtoH}
P^+_{d^m}
&=
d\sum_{\substack{r\in\{1,2\}\\ r\mid d}}
\ \sum_{\lambda\vdash d/r}
(-1)^{\ell(\lambda)+r-1}
\frac{1}{\ell(\lambda)}
\binom{\ell(\lambda)}{m_1(\lambda),m_2(\lambda),\ldots}\,
H_{\lambda^{rm}}.
\end{align}
\end{theorem}
\begin{proof}
Our result follows immediately by combining
Theorem \ref{thm:PtoH} with Theorem \ref{thm:PPP}.
\end{proof}

\begin{definition}
For any stack partition $\tau$, define
\begin{align}
Z_\tau
=
\prod_{d,m} d^{m_{d,m}(\tau)}\, m_{d,m}(\tau)!.
\end{align}
\end{definition}

\begin{example}
Let $\tau
= (3^4,3^3,3^3,2^2,2^2,2^2,2^1,2^1,2^1,2^1).$
Then the nonzero values of $m_{d,m}(\tau)$ are
\begin{align}
m_{3,4}(\tau)=1,\qquad m_{3,3}(\tau)=2,\qquad
m_{2,2}(\tau)=3,\qquad m_{2,1}(\tau)=4.
\end{align}
Hence
\begin{align}
Z_\tau
&=
\prod_{d,m} d^{m_{d,m}(\tau)}\,m_{d,m}(\tau)!
\\
&=
3^{1}1!\cdot 3^{2}2!\cdot 2^{3}3!\cdot 2^{4}4!
\\
&=
995,328.
\end{align}
\end{example}

\begin{theorem}
\label{thm:EHtoPP}
    For positive integers $d$ and $m$, we have
    \begin{align}
    \label{eqn:E_in_Pplus}
        E_{d^m} &= \sum_{\tau\Vdash d} \frac{1}{Z_{\tau}} P^+_{\tau^m},\\
        \label{eqn:H_in_Pplus}
        H_{d^m} &= \sum_{\tau\Vdash d} \frac{(-1)^{\ell(\tau)}}{Z_{\tau}} P^+_{\tau^m},
    \end{align}
    where each multiplicity in $\tau$ is a power of 2.
\end{theorem}
\begin{proof}
We prove \eqref{eqn:E_in_Pplus},  \eqref{eqn:H_in_Pplus} follows by
applying the involution $\Omega$. From \eqref{eqn:GenH=GenP}, we have
\begin{align}
\sum_{d=0}^\infty E_d^+ t^d
&=
\exp\left(-\sum_{k=1}^{\infty}\frac{P_k}{k}t^k\right).
\end{align}
Substituting the expansion of $P_k$ in terms of $P^+$ from
Theorem~\eqref{thm:PtoPP} gives
\begin{align}
\sum_{d=0}^\infty E_d^+ t^d
&=
\exp\left(
\sum_{k=1}^{\infty}
\sum_{j:\,2^j\mid k}
\frac{2^j}{k}\,P^+_{(k/2^j)^{2^j}}\,t^k
\right).
\end{align}
Writing $k=2^j r$ yields $2^j/k=1/r$ and $t^k=t^{2^j r}$, so
\begin{align}
\label{eqn:E_exp_Pplus_clean}
\exp\left(
\sum_{j=0}^{\infty}\sum_{r=1}^{\infty}
\frac{1}{r}\,P^+_{r^{2^j}}\,t^{2^j r}
\right)
=
\prod_{j=0}^{\infty}
\exp\left(
\sum_{r=1}^{\infty}\frac{1}{r}\,P^+_{r^{2^j}}\,t^{2^j r}
\right).
\end{align}
Fix $j\ge 0$, notice that
\begin{align}
\exp\left(
\sum_{r=1}^{\infty}\frac{1}{r}\,P^+_{r^{2^j}}\,t^{2^j r}
\right)=
\prod_{r=1}^{\infty}
\exp\left(
\frac{1}{r}\,P^+_{r^{2^j}}\,t^{2^j r}
\right) = \prod_{r=1}^\infty \sum_{s=0}^{\infty}
\frac{1}{s!}
\left(
\frac{1}{r}\,P^+_{r^{2^j}}\,t^{2^j r}
\right)^s.
\end{align}
Putting everything together gives us 
\begin{align}
    \sum_{d=0}^\infty E_d^+ t^d = \prod_{j=0}^{\infty}\prod_{r=1}^\infty \sum_{s=0}^{\infty}
\frac{1}{s!}
\left(
\frac{1}{r}\,P^+_{r^{2^j}}\,t^{2^j r}
\right)^s.
\end{align}
Thus, choosing a term in the full product is equivalent to choosing, for each
$r$, a nonnegative integer $s$ indicating how many times the
factor $P^+_{r^{2^j}}\,t^{2^j r}$ is selected. Performing this choice independently for all $j\ge 0$ determines a unique stack
partition $\tau$,
in which the term $r^{2^j}$ appears exactly $s=m_{r,2^j}(\tau)$ times.
In particular, every multiplicity in $\tau$ is a power of $2$. Comparing coefficients such that $|\tau|=d$ gives our result, proving $\eqref{eqn:E_in_Pplus}$.
\end{proof}

\section{Final Remarks}
\label{section 6}

This paper develops explicit transition formulas among several natural bases of $\PS$, including $H,E,P$ and their signed variants, extending many classical identities from symmetric function theory to the polysymmetric setting.
The theory of polysymmetric functions was originally motivated in \cite{g2024polysymmetricfunctionsmotivicmeasures} by connections to algebraic geometry via motivic measures, and a full geometric realization of the polysymmetric framework remains an open direction.
We conclude by recording several basic structural questions concerning Schur type bases, inner products, and tableau models in the polysymmetric setting.
\begin{question}
    What is the Schur function analogue in the polysymmetric function setting?
\end{question}
A bilinear form on $\PS$ was introduced in \cite{g2024polysymmetricfunctionsmotivicmeasures}, defined by
\begin{align}
    \langle M_\lambda, H_\tau \rangle = 
\begin{cases}
1 & \text{if } \tau^T = \lambda, \\
0 & \text{otherwise}.
\end{cases}
\end{align}
where, if $\tau = (d_1^{m_1}, \ldots,m_s^{d_s})$ is a stack partition, its transpose $\tau^T$ is defined by $\tau^T = (m_1^{d_1}, \ldots,m_s^{d_s}).$

This serves as a polysymmetric analogue of the classical $\langle m_\lambda, h_\mu \rangle = \delta_{\lambda\mu}$ inner product in $\Lambda$. 

\begin{question}
How does this bilinear form interact with the various natural bases of $\PS$?
In particular, are there bases that are orthogonal or self-dual with respect to this form?
\end{question}

\begin{question}
Is there a natural notion of skewness and of tableaux for stack partitions, analogous to skew shapes and Young tableaux in the classical symmetric function setting?
\end{question}

One can prove that for any positive integer $n$,
\begin{align}
\sum_{\lambda \vdash n}
\frac{n}{\ell(\lambda)}
\binom{\ell(\lambda)}{m_1(\lambda), m_2(\lambda), \ldots}
= 2^n - 1,
\end{align}
where the sum ranges over all integer partitions $\lambda$ of $n$.

We now consider an analogous sum over all \emph{stack partitions} $\tau$ of $n$
\begin{align}
\label{eqn:stackedtablur}
\mathrm{St}(n)=
    \sum_{\tau } \frac{n}{\ell(\tau)} \binom{\ell(\tau)}{m_{1,1}(\tau), m_{1,2}(\tau), \ldots, m_{2,1}(\tau), m_{2,2}(\tau), \ldots}.
\end{align}
\begin{question}
Can one find a closed-form expression for the sum over stack partitions in equation \eqref{eqn:stackedtablur}? We list the first 20 values of this sequence in Table \ref{table:StackedN}.
\end{question}

The definition of a stack partition can be reformulated to yield a new type of partition.
\begin{definition}
Fix a positive integer $d$. A \emph{product-sum partition of order $d$} of $n$ is a finite multiset
\begin{align}
\tau=\Bigl\{ (a^{(1)}_1,\dots,a^{(1)}_d),\ (a^{(2)}_1,\dots,a^{(2)}_d),\ \dots,\ (a^{(k)}_1,\dots,a^{(k)}_d)\Bigr\},
\end{align}
of ordered $d$-tuples of positive integers such that
\begin{align}
\sum_{i=1}^k \ \prod_{j=1}^d a^{(i)}_j \;=\; n.
\end{align}
We write $\mathrm{PSP}_d(n)$ for the set of all such $\tau$. For $n=0$, we take $\mathrm{PSP}_d(0)$ to consist of a single empty multiset.
\end{definition}

Note that $\mathrm{PSP}_1(n)$ recovers ordinary integer partitions, and $\mathrm{PSP}_2(n)$ recovers stack partitions.

\begin{question}
What can be said about the combinatorial structure of $\mathrm{PSP}_d(n)$ and the enumerator $|\mathrm{PSP}_d(n)|$ as $d$ and $n$ vary? 
\end{question}

\begin{table}[h!]

\centering
\caption{The first twenty values of the sequence $\mathrm{St}(n)$.}
\renewcommand{\arraystretch}{1}
\label{table:StackedN}
\begin{tabular}{|c|c||c|c|}
\hline
$n$ & $\mathrm{St}(n)$ & $n$ & $\mathrm{St}(n)$ \\ \hline
1 & 1 & 11 & 20131 \\
2 & 5 & 12 & 49675 \\
3 & 13 & 13 & 122162 \\
4 & 37 & 14 & 300942 \\
5 & 86 & 15 & 740798 \\
6 & 227 & 16 & 1824205 \\
7 & 540 & 17 & 4491095 \\
8 & 1357 & 18 & 11058338 \\
9 & 3316 & 19 & 27226621 \\
10 & 8200 & 20 & 67037152 \\
\hline
\end{tabular}
\end{table}

\begin{table}[h!]
\normalsize
\centering
\renewcommand{\arraystretch}{1.2}
\caption{Polysymmetric Functions Transition Table}
\begin{tabular}{|c|c|c|c|c|c|c|}
\hline
\diagbox[width=7em, height=1.2cm]{From}{To} 
& $H$ & $H^+$ & $E$ & $E^+$ & $P$ & $P^+$ \\ \hline

$H$ 
& $-$ 
& \scalebox{0.75}{Thm. \ref{thm:BtoB+}} 
& \scalebox{0.75}{Thm. \ref{thm:EdHd}} 
& \scalebox{0.75}{Thm. \ref{thm:EtoHP}} 
& \scalebox{0.75}{Thm. \ref{thm:HandP}} 
& \scalebox{0.75}{Thm. \ref{thm:EHtoPP}} 
\\ \hline

$H^+$ 
& \scalebox{0.75}{Thm. \ref{thm:BPtoB}} 
& $-$ 
& \scalebox{0.75}{Thm. \ref{thm:EPtoH}} 
& \scalebox{0.75}{Thm. \ref{thm:PlusToPlus}} 
& \scalebox{0.75}{Thm. \ref{thm:PlustoP}} 
& \scalebox{0.75}{Thm. \ref{thm:PP}} 
\\ \hline

$E$ 
& \scalebox{0.75}{Thm. \ref{thm:EdHd}} 
& \scalebox{0.75}{Thm. \ref{thm:EtoHP}} 
& $-$ 
& \scalebox{0.75}{Thm. \ref{thm:BtoB+}} 
& \scalebox{0.75}{Thm. \ref{thm:HandP}} 
& \scalebox{0.75}{Thm. \ref{thm:EHtoPP}} 
\\ \hline

$E^+$ 
& \scalebox{0.75}{Thm. \ref{thm:EPtoH}}  
& \scalebox{0.75}{Thm. \ref{thm:PlusToPlus}} 
& \scalebox{0.75}{Thm. \ref{thm:BPtoB}} 
& $-$ 
& \scalebox{0.75}{Thm. \ref{thm:PlustoP}} 
& \scalebox{0.75}{Thm. \ref{thm:PP}} 
\\ \hline

$P$ 
& \scalebox{0.75}{Thm. \ref{thm:PtoH}} 
& \scalebox{0.75}{Thm. \ref{thm:PtoPlus}} 
& \scalebox{0.75}{Thm. \ref{thm:PtoH}} 
& \scalebox{0.75}{Thm. \ref{thm:PtoPlus}} 
& $-$ 
& \scalebox{0.75}{Thm. \ref{thm:PPP}} 
\\ \hline

$P^+$ 
& \scalebox{0.75}{Thm. \ref{thm:PPtoEH}} 
& \scalebox{0.75}{Thm. \ref{thm:PP}} 
& \scalebox{0.75}{Thm. \ref{thm:PPtoEH}} 
& \scalebox{0.75}{Thm. \ref{thm:PP}} 
& \scalebox{0.75}{Thm. \ref{thm:PPP}} 
& $-$ 
\\ \hline

\end{tabular}
\end{table}

\bibliographystyle{plain} 
\begin{center}
    \bibliography{references} 
\end{center}
\end{document}